\newcommand{\cA}{\mathcal{A}}
\newcommand{\bR}{\mathbb R}
\newcommand{\rn}{\mathbb R}
\newcommand{\del}{\nabla}
\newcommand{\cT}{\mathcal{T}}
\newcommand{\rT}{t_{\mathrm{max}}}
\newcommand{\spi}{\mathrm{Spin(7)}}
\newcommand{\qandq}{\quad\text{and}\quad}
\newcommand{\ddt}{\frac{d}{dt}}
\newcommand{\sn}{\mathbb{S}}
\DeclareMathOperator\End{End}
\DeclareMathOperator\vol{vol_g}
\DeclareMathOperator\Div{div}
\DeclareMathOperator\tr{tr}
\DeclareMathOperator\Fr{Fr}
\newtheorem{theorem}{Theorem}[section]
\newtheorem{corollary}[theorem]{Corollary}
\newtheorem{lemma}[theorem]{Lemma}
\newtheorem{proposition}[theorem]{Proposition}
\theoremstyle{definition}
\newtheorem{definition}[theorem]{Definition}
\newtheorem{remark}[theorem]{Remark}
\title{The analysis of the harmonic-$\mathrm{Spin}(7)$ flow}
\author{E. Loubeau}
\address{Univ. Brest, CNRS UMR 6205, LMBA, F-29238 Brest, France}
\email{loubeau@univ-brest.fr}
\date{April 2023}
\thanks{This work benefited from the projects CAPES-COFECUB MA 898/18, MathAmSud "Special geometries and moduli spaces" and PRCI ANR--FAPESP Bridges: ANR-21-CE40-0017/Fapesp 2021/04065-6.}
\begin{document}

\begin{abstract}
 The group $\spi$ belongs to the list of possible holonomy of an eight-dimensional Riemannian manifold. The weaker notion of $\spi$-structures plays for manifolds with holonomy $\spi$, the analogue of almost Hermitian for K{\"a}hler manifolds. As part of a more general scheme, a notion of harmonicity of $\spi$-structures is developed with the objective of comparing isometric $\spi$-structures among themselves.
 We present here an account of our study in \cite{DLSE} of the harmonic flow of $\spi$-structures and its analytical properties.
\end{abstract}

\maketitle

\section{Introduction}

At the confluence of holonomy theory and harmonic maps lies the calculus of variations for geometric structures. The catalogue of holonomy groups $\mathrm{Hol}(M,g)$ of an irreducible non-symmetric simply-connected $n$-dimensional Riemannian manifold is rather brief: $\mathrm{SO}(n)$ (generic), $\mathrm{U}(\frac{n}{2})$ (K{\"a}hler), $\mathrm{SU}(\frac{n}{2})$ (Calabi-Yau), $\mathrm{Sp}(\frac{n}{4})$ (HyperK{\"a}hler), $\mathrm{Sp}(1)\mathrm{Sp}(\frac{n}{4})$ (Quaternionic K{\"a}hler), $\mathrm{G}_2$ ($n=7)$ and $\spi$ ($n=8$).
Its main repercussion is on the symmetries of the curvature tensor which then must live in the Lie algebra $\mathfrak{Hol}(M,g)$, so that Calabi-Yau, HyperK{\"a}hler, $\mathrm{G}_2$ and $\spi$ manifolds are Ricci flat. Our interest is with the last case. As explained in \cite{Lawson-Michelsohn}, the Clifford algebra $\mathrm{Cl}_7$ is isomorphic to $\rn [8] \oplus \rn [8]$, so as the group $\spi$ lives in $\mathrm{Cl}_6$, which is isomorphic to one of these two factors, it admits one irreducible representation of dimension eight. Since it must be unitary, $\spi$ can be seen as a $21$-dimensional subgroup of $\mathrm{SO}(8)$ (cf.~\cite{vadararajan} for an expos{\'e} on its conjugacy classes).

It has been know for quite a while \cite{bonan1,bonan2,bonan3} that holonomy in $\spi$ is equivalent to the existence of a parallel $4$-form $\Phi$ point-wise equal to 
\begin{align*}
\Phi_p = & dx^{0123} - dx^{0167} - dx^{0527} -dx^{0563} - dx^{0415} - dx^{0426} - dx^{0437} + dx^{4567}  \\
& - dx^{4523} - dx^{4163} - dx^{4127} - dx^{2637} - dx^{1537} - dx^{1526} ,
\end{align*}
where $dx^{ijkl} = dx^{i}\wedge dx^{j} \wedge dx^{k} \wedge dx^{l}$ (though there exist $480$ different ways to write down this Euclidean model on $\rn^8$).

The group $\spi$ can then be thought of as the group of automorphisms of $\Phi_p$. This geometry comes from the octonions, much like $\mathrm{G}_2$, and has long been suspected to be an impostor waiting to be removed from the list, as happened to $\mathrm{Spin}(9)$ with Alekseevsky~\cite{alekseevsky}.

The first examples of Riemannian manifolds with $\spi$ holonomy are due to Bryant~\cite{bryant} in 1985 on open subsets of Euclidean spaces and complete examples followed four years later \cite{bryant-salamon} on the spinor bundle of $\sn^4$.
For compact examples, we had to wait for Joyce in 1996~\cite{joyceart}, and a comprehensive account can be found in \cite{joycebook}. Foscolo~\cite{foscolo} recently constructed complete non-compact $\spi$–manifolds with arbitrarily large second Betti number
and infinitely many distinct families of asymptotically locally conical $\spi$–metrics on the same smooth topological $M^8$. Kovalev~\cite{kovalev} adapted in 2003 a conical asymptotical gluing argument to obtain $\spi$-manifolds from twisted connected sums. More explicit is Salamon's example of the product of $\rn^+$ with the nearly-$\mathrm{G}_2$ manifold ${\mathrm{SO}(5)}/{\mathrm{SO}(3)}$.
$\spi$-Manifolds are hard to find but they are interesting for at least two other reasons:
\begin{itemize}
    \item[a)] One can define a higher gauge theory of $\spi$-instantons with the (still fairly remote) hope of defining moduli spaces and invariants, and perhaps a (partial) classification of $8$-dimensional manifolds. These ``twisted D-T instantons'' are vector bundles with a connection $A$ such that their curvature tensor $F_A$ lies in some (irreducible) component $\Omega^2_{21}$ (cf. the next section for conventions and notations), equivalently satisfies
    $$
    F_A \wedge \Phi = \star F_A .$$
    See \cite{clarke-oliveira,tanaka} for some $\spi$-instanton constructions.
    \item[b)] Supersymmetry and string theory have invested a lot in of hope in $\spi$-manifolds to construct solutions to the gravitino and dilatino equations~\cite{ivanov}.
\end{itemize}

However, all these constructions are hard but there exists the softer, more abundant, notion of $\spi$-structure.

\section{$\spi$-structures}

The best reference for this section, especially pertaining to flows, is Karigiannis' notes~\cite{karigiannis-spin7}. 

A $\spi$-structure on an $8$-dimensional manifold $M$ is a reduction of the structure group of the frame bundle $\Fr(M)$  to the Lie group $\spi\subset \mathrm{SO}(8)$. From the point of view of differential geometry, a $\spi$-structure is a $4$-form $\Phi$ on $M$. The existence of such a structure is (equivalent to) a topological condition, cf. \cite[Theorem 10.7]{Lawson-Michelsohn}: the vanishing of the first and second Stiefel-Whitney classes and, for some orientation
$$
p_1^2 - 4 p_2 + 8\chi =0.
$$
The space of $4$-forms which determine a $\spi$-structure on $M$ is a subbundle $\cA$ of $\Omega^4(M)$, called the bundle of \emph{admissible} $4$-forms. This is \emph{not} a vector subbundle and it is not even an open subbundle, unlike the case for $\mathrm{G}_2$-structures.

A $\spi$-structure determines a Riemannian metric and an orientation on $M$ in a nonlinear way. Explicit formulas can be found in \cite{karigiannis-spin7}, they are highly involved and it is hard to picture how they could be exploited. But it is crucial to our approach that several $\spi$-structures will give rise to the same Riemannian metric, much like for the ${\mathrm G}_2$-case.
The metric and the orientation determine a Hodge star operator $\star$, and the $4$-form is \emph{self-dual}, i.e., $\star \Phi=\Phi$. 

\begin{definition}
    Let $\del$ be the Levi-Civita connection of the metric $g_{\Phi}$. The pair $(M, \Phi)$ is a \emph{$\spi$-manifold} if $\del \Phi=0$. This is a non-linear partial differential equation for $\Phi$, since $\del$ depends on $g$, which in turn depends non-linearly on $\Phi$. A $\spi$-manifold has Riemannian holonomy contained in the subgroup $\spi\subset \mathrm{SO}(8)$. Such a parallel $\spi$-structure is also called \emph{torsion-free}. 
\end{definition}

\subsection{Decomposition of the space of forms}
\label{subsec:formdecomp}

The existence of a $\spi$-structure $\Phi$ induces a decomposition of the space of differential forms on $M$  into irreducible $\spi$ representations. We have the following orthogonal decomposition, with respect to $g_\Phi$:
\begin{align*}
\Omega^2=\Omega^2_{7}\oplus \Omega^2_{21},\ \ \ \ \ \ \ \Omega^3=\Omega^3_{8}\oplus \Omega^3_{48}, \ \ \ \ \ \ \ \ \  \Omega^4=\Omega^4_{1}\oplus \Omega^4_{7}\oplus \Omega^4_{27}\oplus \Omega^4_{35},
\end{align*}
where $\Omega^k_l$ has pointwise dimension $l$. Explicitly, $\Omega^2$ and $\Omega^3$ are described as follows:
\begin{align*}
    \Omega^2_7&=\{\beta \in \Omega^2 \mid \star(\Phi \wedge \beta)=-3\beta\}, \quad
    \Omega^2_{21}=\{ \beta\in \Omega^2 \mid \star(\Phi\wedge \beta)=\beta\},
\end{align*}
and
\begin{align*}
    \Omega^3_8&=\{ X\lrcorner \Phi \mid X\in \Gamma(TM)\}, \quad
    \Omega^3_{48}=\{ \gamma \in \Omega^3\mid \gamma \wedge \Phi =0\}. 
\end{align*}
In local coordinates, these spaces of forms are described as, for $\beta\in \Omega^2(M)$,
\begin{align}
    \beta_{ij}\in \Omega^2_7 \iff \beta_{ab}\Phi_{abij}&=-6\beta_{ij},\label{eq:27decomp2}\\
    \beta_{ij}\in \Omega^2_{21} \iff \beta_{ab}\Phi_{abij}&=2\beta_{ij} \label{eq:221decomp2}
\end{align}
and, for $\gamma\in \Omega^3(M)$,
\begin{align}
    \gamma_{ijk}\in \Omega^3_8 &\iff \gamma_{ijk}=X_l\Phi_{ijkl}\ \ \ \ \textup{for\ some}\ X\in \Gamma(TM), \label{eq:38decomp2}\\
    \gamma_{ijk}\in \Omega^3_{48} &\iff \gamma_{ijk}\Phi_{ijkl}=0. \label{eq:348decomp2}
\end{align}
If $\pi_7$ and $\pi_{21}$ are the projection operators on $\Omega^2$, it follows from \eqref{eq:27decomp2} and \eqref{eq:221decomp2} that 
\begin{align*}
\pi_7(\beta)_{ij}&=\frac 14\beta_{ij}-\frac 18\beta_{ab}\Phi_{abij}, \\
\pi_{21}(\beta)_{ij}&=\frac 34\beta_{ij}+\frac 18\beta_{ab}\Phi_{abij}. 
\end{align*}
Finally, for $\beta_{ij}\in \Omega^2_{21}$,
\begin{align*}
    \beta_{ab}\Phi_{bpqr}&=\beta_{pi}\Phi_{iqra}+\beta_{qi}\Phi_{irpa}+\beta_{ri}\Phi_{ipqa},   
\end{align*}
so $\Omega^2_{21}\equiv \mathfrak{so}(7)$ is the Lie algebra of $\spi$.

To describe $\Omega^4$ in local coordinates, we use the operator $\diamond$ for a $(p,q)$-tensor $\xi$ and $A\in\End(TM)$:
    \begin{align*}
        \diamond \, \xi: \quad \End(TM) &\to T^{p,q}  \\
        A\mapsto & A \diamond \xi:= \left.\ddt\right\vert_{t=0} e^{tA}.\xi. 
    \end{align*}
Now, given $A\in \Gamma(T^*M\otimes TM)$, define
\begin{align}
\label{eq:diadefn1}
    A\diamond \Phi= \frac{1}{24}(A_{ip}\Phi_{pjkl}+A_{jp}\Phi_{ipkl}+A_{kp}\Phi_{ijpl}+A_{lp}\Phi_{ijkp})dx^i\wedge dx^j\wedge dx^k\wedge dx^l,    
\end{align}
and hence 
\begin{align}
\label{eq:diadefn2}
    (A\diamond \Phi)_{ijkl}=  A_{ip}\Phi_{pjkl}+A_{jp}\Phi_{ipkl}+A_{kp}\Phi_{ijpl}+A_{lp}\Phi_{ijkp}. 
\end{align}
Recall that $
    \Gamma(T^*M\otimes TM)=\Omega^0\oplus S_0\oplus \Omega^2$,
and $\Omega^2$ splits further orthogonally, so
\begin{align*}
 \Gamma(T^*M\otimes TM)=\Omega^0\oplus S_0 \oplus \Omega^2_7\oplus \Omega^2_{21}.   
\end{align*}
With respect to this splitting, we can write $A=\frac 18 (\tr A)g+A_0+A_7+A_{21}$ where $A_0$ is a symmetric traceless $2$-tensor. 
The diamond contraction \eqref{eq:diadefn2} defines a linear map $A\mapsto A\diamond \Phi$, from $\Omega^0\oplus S_0 \oplus \Omega^2_7\oplus \Omega^2_{21}$ to $\Omega^4(M)$. 
The following proposition is proved  in \cite[Prop. 2.3]{karigiannis-spin7}.

\begin{proposition}\label{prop:diaproperties1}
The kernel of the map $A\mapsto A\diamond \Phi$ is isomorphic to the subspace $\Omega^2_{21}$. The remaining three summands $\Omega^0,\ S_0$ and $\Omega^2_7$ are mapped isomorphically onto the subspaces $\Omega^4_1,\ \Omega^4_{35}$ and $\Omega^4_7$ respectively.
\end{proposition}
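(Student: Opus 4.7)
The plan is to argue purely by representation theory of $\spi$.

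First, I would show that the diamond map is $\spi$-equivariant. By its definition, $A\diamond \Phi$ is the infinitesimal variation at $t=0$ of the $\mathrm{GL}(8)$-action $t\mapsto e^{tA}.\Phi$; since $\spi\subset \mathrm{SO}(8)$ fixes $\Phi$ pointwise, conjugating $A$ by an element of $\spi$ conjugates the flow, so $\diamond$ intertwines the induced $\spi$-representations on $\End(TM)$ and $\Omega^4(M)$.

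Next, I would combine this with Schur's lemma. In the source the four irreducible summands $\Omega^0,\ S_0,\ \Omega^2_7,\ \Omega^2_{21}$ have dimensions $1,35,7,21$ and are pairwise non-isomorphic; in the target $\Omega^4_1,\ \Omega^4_7,\ \Omega^4_{27},\ \Omega^4_{35}$ have dimensions $1,7,27,35$. Matching $\spi$-types, on each source piece the only possibly non-zero component of $\diamond$ is the one landing in the target piece of the same type. Since no $\Omega^4_{21}$ summand exists, $\Omega^2_{21}$ automatically lies in $\ker(\diamond)$, and each of the three remaining restrictions $\Omega^0\to \Omega^4_1$, $S_0\to \Omega^4_{35}$, $\Omega^2_7\to \Omega^4_7$ is either zero or an isomorphism.

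Third, I would rule out the zero alternative. The cleanest route uses the classical fact that $\spi$ is exactly the stabiliser of $\Phi_p$ in $\mathrm{GL}(8,\bR)$, so its Lie algebra is $\mathfrak{spin}(7)\cong \Omega^2_{21}$; the kernel of $\diamond$ is therefore \emph{exactly} $\Omega^2_{21}$, and the three remaining restrictions must be injective, hence by Schur isomorphisms. Should one prefer a hands-on check, it suffices to produce a single non-zero image on each piece: substituting $A=g$ into \eqref{eq:diadefn2} gives $(g\diamond \Phi)_{ijkl}=4\Phi_{ijkl}$, settling the scalar case, and analogous explicit evaluations on a diagonal traceless $A_0$ and on a chosen $A_7\in \Omega^2_7$ settle the other two.

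The main obstacle, and the only step where invariant-theoretic bookkeeping alone is insufficient, is the identification $\ker(\diamond)=\Omega^2_{21}$; this requires either Bryant's characterisation of $\spi$ as the full $\mathrm{GL}(8,\bR)$-stabiliser of the Cayley form, or direct contraction identities involving $\Phi_{ijkl}$ together with the decomposition formulas \eqref{eq:27decomp2}--\eqref{eq:348decomp2}. With this identification in hand, the remaining conclusions follow mechanically from equivariance, dimension count, and Schur's lemma.
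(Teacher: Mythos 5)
The paper does not give its own proof of this proposition; it delegates entirely to \cite[Prop.~2.3]{karigiannis-spin7}, where the argument is by direct contraction computations with $\Phi$ using identities of the form \eqref{eq:impiden1}--\eqref{eq:impiden4}. Your representation-theoretic route is therefore a genuinely different, and arguably more conceptual, approach, and it is essentially correct: the equivariance of $\diamond$ follows from its definition as the derivative of the $\mathrm{GL}(8)$-action, the stabiliser characterisation of $\spi$ in $\mathrm{GL}(8,\bR)$ identifies $\ker(\diamond)$ with $\mathfrak{spin}(7)\cong \Omega^2_{21}$ (which is the one genuinely non-formal input), and your sanity check $(g\diamond\Phi)_{ijkl}=4\Phi_{ijkl}$ is correct. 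The one place you should be more careful is where you invoke Schur's lemma to ``match $\spi$-types'': you do this solely by dimension, but two non-isomorphic irreducibles can share a dimension in general, so you are implicitly asserting that $\Omega^0$, $S_0$, $\Omega^2_7$ are isomorphic as $\spi$-modules to $\Omega^4_1$, $\Omega^4_{35}$, $\Omega^4_7$ respectively. That is true for $\spi$ --- in dimensions $1$, $7$, $35$ there is a unique irreducible up to isomorphism in the relevant range --- but it deserves either an explicit appeal to the classification of small $\spi$-irreps or a quick highest-weight check. In exchange for needing the stabiliser fact and the irrep table, your approach avoids the index bookkeeping entirely; the computational route of Karigiannis has the side benefit of producing the eigenvalue data of Proposition~\ref{prop:427decomp} along the way.
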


To understand $\Omega^4_{27}$, we need another characterization of the space of $4$-forms using the $\spi$-structure. Following \cite{karigiannis-spin7}, we adopt the following:

\begin{definition}
    On $(M, \Phi)$, define a $\Phi$-equivariant linear operator $\Lambda_{\Phi}$ on $\Omega^4$ as follows. Let $\sigma\in \Omega^4(M)$ and let $(\sigma \cdot \Phi)_{ijkl}=\sigma_{ijmn}\Phi_{mnkl}$. Then 
\begin{align*}
    (\Lambda_{\Phi}(\sigma))_{ijkl}=(\sigma \cdot \Phi)_{ijkl}+(\sigma \cdot \Phi)_{iklj}+(\sigma \cdot \Phi)_{iljk}+(\sigma \cdot \Phi)_{jkil}+(\sigma \cdot \Phi)_{jlki}+(\sigma \cdot \Phi)_{klij}. 
\end{align*}
\end{definition}

\begin{proposition}
\label{prop:427decomp}
    The spaces $\Omega^4_1$, $\Omega^4_7,\ \Omega^4_{27}$ and $\Omega^4_{35}$ are all eigenspaces of $\Lambda_{\Phi}$ with distinct eigenvalues:
\begin{align*}
  \begin{aligned}
   \Omega^4_1&=\{\sigma\in \Omega^4 \mid \Lambda_{\Phi}(\sigma)=-24\sigma\}, \\       \Omega^4_{27}&=\{\sigma\in \Omega^4 \mid \Lambda_{\Phi}(\sigma)=4\sigma\},
  \end{aligned}
  &&
  \begin{aligned}
   \Omega^4_7&=\{\sigma\in \Omega^4 \mid \Lambda_{\Phi}(\sigma)=-12\sigma\}, \\       \Omega^4_{35}&=\{\sigma\in \Omega^4 \mid \Lambda_{\Phi}(\sigma)=0\}.
  \end{aligned}
 \end{align*}
 Moreover, the decomposition of $\Omega^4(M)$ into self-dual and anti-self-dual parts is
 \begin{align*}
\Omega^4_+=\{\sigma \in \Omega^4\mid \star \sigma = \sigma\}=\Omega^4_1\oplus \Omega^4_7\oplus \Omega^4_{27},\ \ \ \ \ \Omega^4_-=\{\sigma\in \Omega^4\mid \star \sigma=-\sigma\}=\Omega^4_{35} . 
 \end{align*}
\end{proposition}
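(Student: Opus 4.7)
\begin{sop}
My plan is to exploit the $\spi$-equivariance of $\Lambda_\Phi$ and Schur's lemma, then compute the scalar action on each irreducible summand.

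First I would check that $\Lambda_\Phi$ is well defined as an operator $\Omega^4 \to \Omega^4$. Each $(\sigma\cdot \Phi)_{abcd}$ is a priori only a $(2,2)$-tensor antisymmetric within the pairs $(ab)$ and $(cd)$, and the six summands in the definition are exactly those needed to antisymmetrize over the full index set $\{i,j,k,l\}$; the resulting total antisymmetry is a direct combinatorial verification using the antisymmetry of $\sigma$ and $\Phi$. Since $\Lambda_\Phi$ is built purely from $\Phi$ and algebraic index contractions, it commutes with the $\spi$-action. Because the four summands $\Omega^4_k$ are pairwise non-isomorphic irreducible $\spi$-representations (the dimensions $1, 7, 27, 35$ are distinct), Schur's lemma ensures that $\Lambda_\Phi$ acts as a scalar $\lambda_k$ on each, and the task reduces to identifying these four numbers.

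Second, I would compute $\lambda_1$, $\lambda_7$, and $\lambda_{35}$ by evaluating $\Lambda_\Phi$ on convenient test elements drawn from Proposition~\ref{prop:diaproperties1}: the scalar line is represented by $\Phi$ itself, the $7$-dimensional summand by $\beta \diamond \Phi$ with $\beta \in \Omega^2_7$, and the $35$-dimensional summand by $h \diamond \Phi$ with $h \in S_0$. In each case $\Lambda_\Phi(\sigma)$ expands into sums of products of two or three copies of $\Phi$, which unwind via the fundamental $\spi$ contraction identity for $\Phi_{ijab}\Phi_{abkl}$ (a combination of a Kronecker-delta part and a $\Phi_{ijkl}$-part) and its single-index consequences. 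The Kronecker parts recombine into a multiple of the test element via the eigenvalue relation \eqref{eq:27decomp2} for $\beta \in \Omega^2_7$, via the trace-free condition on $h$, or via plain symmetry, while the $\Phi$-parts simplify by the same identities, producing $\lambda_1 = -24$, $\lambda_7 = -12$ and $\lambda_{35} = 0$. The remaining $\lambda_{27}$ is then pinned down by a pointwise trace identity: computing $\tr(\Lambda_\Phi)$ on $\Omega^4$ in an orthonormal coframe collapses every diagonal contribution to a multiple of $|\Phi|^2 = 14$, and matching the result against $1\cdot \lambda_1 + 7\cdot \lambda_7 + 27\cdot \lambda_{27} + 35\cdot \lambda_{35}$ forces $\lambda_{27} = 4$.

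Third, for the self-duality statement I would verify directly that $h \diamond \Phi$ is anti-self-dual whenever $h \in S_0$. Using $\star \Phi = \Phi$, the symmetry of $h$, and the contraction identity once more, the four terms in $(h \diamond \Phi)_{ijkl}$ reassemble, after Hodge-dualizing, into minus the same combination. Since $\dim \Omega^4_{35} = 35 = \dim \Omega^4_-$, this forces $\Omega^4_{35} = \Omega^4_-$, and the remaining summands $\Omega^4_1 \oplus \Omega^4_7 \oplus \Omega^4_{27}$ must coincide with $\Omega^4_+$.

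The principal obstacle will be the contraction algebra of $\Phi$: every eigenvalue computation rests on the precise form of $\Phi_{ijab}\Phi_{abkl}$ and its index-contracted relatives, and bookkeeping of signs in the six-term sum defining $\Lambda_\Phi$ is delicate. Establishing anti-self-duality of $h \diamond \Phi$ is the second sensitive step, as it is where the symmetry (rather than antisymmetry) of $h$ plays a decisive role.
\end{sop}
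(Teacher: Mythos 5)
The paper does not prove this proposition: it is stated after a ``Following \cite{karigiannis-spin7}, we adopt the following'' and the result is attributed to Karigiannis' lecture notes, so there is no in-text argument to compare against. Your proposal, however, is a correct and standard route. The Schur's lemma reduction is sound, since the four summands are pairwise non-isomorphic irreducibles by dimension, and your choice of test elements $\Phi$, $\beta\diamond\Phi$ and $h\diamond\Phi$ is exactly what Proposition~\ref{prop:diaproperties1} provides. The eigenvalue $\lambda_1=-24$ checks out cleanly via identity~\eqref{eq:impiden2}: the eighteen $g\!\cdot\!g$ terms cancel in pairs and the six $-4\Phi$ contributions remain. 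Two small remarks. First, the trace of $\Lambda_\Phi$ on $\Omega^4$ is in fact $0$ (each of the six even permutations contributes $\Phi_{klkl}$-type terms that vanish by skew-symmetry; the generalized Kronecker delta contraction then kills everything), so the phrase ``collapses to a multiple of $|\Phi|^2=14$'' is slightly misleading --- the multiple is zero --- though this still yields $-24 - 84 + 27\lambda_{27} = 0$ and hence $\lambda_{27}=4$. Second, your direct check that $h\diamond\Phi$ is anti-self-dual for $h\in S_0$ is correct but can be bypassed: by Schur's lemma $\star$ is $\pm 1$ on each summand, $\star=+1$ on $\Omega^4_1$ since $\star\Phi=\Phi$, and since $\dim\Omega^4_\pm = 35$ while $\dim(\Omega^4_1\oplus\Omega^4_{35})=36$, the summand $\Omega^4_{35}$ is forced to carry $\star=-1$; the remaining two summands must then both be self-dual to avoid overfilling $\Omega^4_-$. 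Do keep in mind that the trace-free condition on $h$ is essential here --- $g\diamond\Phi = 4\Phi$ is self-dual --- so your note that ``the symmetry of $h$ plays a decisive role'' understates matters slightly; trace-freeness is what distinguishes the anti-self-dual image.
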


\medskip

Before we discuss the torsion of a $\spi$-structure, we note some contraction identities involving the $4$-form $\Phi$. In local coordinates $\{x^1, \cdots, x^8\}$, the $4$-form $\Phi$ is
\begin{align*}
\Phi=\frac{1}{24}\Phi_{ijkl}\ dx^i\wedge dx^j\wedge dx^k\wedge dx^l    
\end{align*}
where $\Phi_{ijkl}$ is totally skew-symmetric. We have the following identities, as always summing on repeated indices, which encapsulate the symmetries of a $\spi$-structure
\begin{align}
    \Phi_{ijkl}\Phi_{abcl}
    &=g_{ia}g_{jb}g_{kc}+g_{ib}g_{jc}g_{ka}+g_{ic}g_{ja}g_{kb}\nonumber \\
    & \quad -g_{ia}g_{jc}g_{kb}-g_{ib}g_{ja}g_{kc}-g_{ic}g_{jb}g_{ka} \nonumber \\
    & \quad -g_{ia}\Phi_{jkbc}-g_{ib}\Phi_{jkca}-g_{ic}\Phi_{jkab} \nonumber \\
    & \quad -g_{ja}\Phi_{kibc}-g_{jb}\Phi_{kica}-g_{jc}\Phi_{kiab} \nonumber \\
    & \quad -g_{ka}\Phi_{ijbc}-g_{kb}\Phi_{ijca}-g_{kc}\Phi_{ijab} \label{eq:impiden1} \\
    \Phi_{ijkl}\Phi_{abkl}&=6g_{ia}g_{jb}-6g_{ib}g_{ja}-4\Phi_{ijab} \label{eq:impiden2} \\
    \Phi_{ijkl}\Phi_{ajkl}&=42g_{ia} \label{eq:impiden3} \\
    \Phi_{ijkl}\Phi_{ijkl}&=336 . \label{eq:impiden4}
\end{align}
We also have contraction identities involving $\del \Phi$ and $\Phi$
\begin{align*}
(\del_m\Phi_{ijkl})\Phi_{abkl}&=-\Phi_{ijkl}(\del_m\Phi_{abkl})-4\del_m\Phi_{ijab} \\
(\del_m\Phi_{ijkl})\Phi_{ajkl}&=-\Phi_{ijkl}(\del_m\Phi_{ajkl}) \\
(\del_m\Phi_{ijkl})\Phi_{ijkl}&=0. =
\end{align*}

We now describe the \emph{torsion} of a $\spi$-structure. Given $X\in \Gamma(TM)$, we know from \cite[Lemma 2.10]{karigiannis-spin7} that $\del_X\Phi$ lies in the subbundle $\Omega^4_7\subset\Omega^4$. 
\begin{definition}
    The \emph{torsion tensor} of a $\spi$-structure $\Phi$ is the element of $\Omega^1_8\otimes \Omega^2_7$ defined by expressing $\del \Phi$ in the light of Proposition \ref{prop:diaproperties1}:
\begin{align}
\label{Tdefneqn}
    \del_m\Phi_{ijkl}=(T_m\diamond \Phi)_{ijkl}=T_{m;ip}\Phi_{pjkl}+T_{m;jp}\Phi_{ipkl}+T_{m;kp}\Phi_{ijpl}+T_{m;lp}\Phi_{ijkp}    
\end{align}
    where $T_{m;ab}\in\Omega^2_7$, for each fixed $m$. 
\end{definition}

Directly in terms of $\del \Phi$, the torsion $T$ is given by
\begin{align}
\label{eq:Texpress}
    T_{m;ab} 
    =\frac{1}{96}(\del_m\Phi_{ajkl})\Phi_{bjkl}    
\end{align}

{\begin{remark}
We remark that the notation $T_{m;ab}$ \emph{should not} be confused with taking two covariant derivatives of $T_m$. The torsion tensor $T$ is an element of $\Omega^1_8\otimes \Omega^2_7$ and thus for each fixed index $m$, $T_{m;ab}\in \Omega^2_7$, but $T$ is not in $\Omega^3$. 
\end{remark}}

\begin{theorem}
\cite{fernandez-spin7}
    The $\spi$-structure $\Phi$ is torsion-free if, and only if, $d\Phi=0$. Since $\star \Phi=\Phi$, this is equivalent to $d^*\Phi=0$.
\end{theorem}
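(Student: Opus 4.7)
The plan is as follows. The equivalence between $d\Phi=0$ and $d^{\ast}\Phi=0$ is immediate from the self-duality $\star\Phi=\Phi$, since on a $4$-form in dimension $8$ one has $d^{\ast}\Phi=-\star d\star\Phi=-\star d\Phi$. The direction $\nabla\Phi=0\Rightarrow d\Phi=0$ is also clear, as $d\Phi$ is simply the total antisymmetrization of $\nabla\Phi$. The content of the theorem is the converse, which I would prove via the torsion tensor of Definition \eqref{Tdefneqn}: it suffices to show that $d\Phi=0$ forces $T=0$, since $\nabla\Phi$ is recovered from $T$ by $\nabla_m\Phi_{ijkl}=(T_m\diamond\Phi)_{ijkl}$.

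Starting from this defining formula and antisymmetrizing over the five indices $m,i,j,k,l$ gives an explicit $\spi$-equivariant linear map
\[
F:\Omega^{1}_{8}\otimes \Omega^{2}_{7}\longrightarrow \Omega^{5},\qquad T\longmapsto d\Phi,
\]
whose kernel we must show is trivial. Both source and target are $56$-dimensional, and each splits into the same pair of irreducible $\spi$-modules, namely $V_{8}\oplus V_{48}$: for $\Omega^{5}$ this is Hodge-dual to $\Omega^{3}=\Omega^{3}_{8}\oplus \Omega^{3}_{48}$, while for $\Omega^{1}_{8}\otimes \Omega^{2}_{7}$ it is the decomposition underlying Fernandez's four-class classification, which can be extracted from \eqref{eq:impiden1} and \eqref{eq:impiden2}. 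Since $F$ is $\spi$-equivariant, Schur's lemma implies $F$ acts as a scalar on each of these two isotypic components, so injectivity reduces to showing neither scalar vanishes.

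To pin down the two scalars I would evaluate $F$ on a single explicit generator of each isotypic component and read off the resulting coefficient. The computation uses the four-term antisymmetrization of $T_m \diamond \Phi$, the contraction identities \eqref{eq:impiden1}--\eqref{eq:impiden3}, and the characterisation $T_{m;ab}\Phi_{abij}=-6\,T_{m;ij}$ from \eqref{eq:27decomp2}; it is conceptually routine but notationally heavy, and this bookkeeping is precisely the main obstacle of the proof. Once both scalars are verified to be nonzero, $F$ is an isomorphism and $d\Phi=0\Rightarrow T=0\Rightarrow \nabla\Phi=0$, closing the argument.
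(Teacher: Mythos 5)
The paper does not supply a proof of this theorem at all; it is stated with a citation to Fern\'andez \cite{fernandez-spin7}, so there is no ``paper proof'' to compare against. Judged on its own terms, your outline is the standard representation-theoretic argument, and it is sound. The reduction of the converse direction to injectivity of the $\spi$-equivariant map $F\colon \Omega^1_8\otimes\Omega^2_7 \to \Omega^5$, $T \mapsto d\Phi$, is exactly right: the domain is where the intrinsic torsion lives, the dimension count $56 = 56$ checks, and the decompositions $\Omega^1_8\otimes\Omega^2_7 \cong V_8\oplus V_{48}$ (Clifford multiplication splitting $\Delta\otimes V_7$) and $\Omega^5 \cong \star\Omega^3 = V_8\oplus V_{48}$ are both correct with each summand of multiplicity one, so Schur's lemma applies as you say. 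The sign computation $d^*\Phi = -\star d\star\Phi = -\star d\Phi$ for a $4$-form on an $8$-manifold is also correct.

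The one genuine gap is the one you flag yourself: the nonvanishing of the two Schur scalars is not established, and that is precisely the content of Fern\'andez's theorem rather than a formality. Nothing in the equivariance argument forbids one of the scalars from vanishing a priori -- indeed this is exactly what \emph{does} happen in related settings (e.g.\ for $\mathrm{G}_2$-structures, $d\varphi$ alone does not see all four torsion classes and one must also use $d\star\varphi$). Here the self-duality $\star\Phi=\Phi$ collapses the two exterior derivatives into one, which is why a single form $d\Phi$ can capture both components, but that phenomenon needs to be witnessed by the explicit contraction you defer. A clean way to close it, in the spirit of your outline, is to evaluate $F$ on the $V_8$-component $T_{m;ab} = X_p\Phi_{pmab}$ (for a fixed vector $X$, projected to $\Omega^2_7$ in the last two slots) and on a generic $V_{48}$ element, then contract $d\Phi$ against $\Phi$ using \eqref{eq:impiden1}--\eqref{eq:impiden3} and \eqref{eq:27decomp2} to isolate the $\Omega^5_8$ and $\Omega^5_{48}$ parts; the two resulting nonzero constants complete the proof. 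As written, the proposal is a correct strategy with the crucial computation left as a promissory note.
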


Finally, the torsion  satisfies a `Bianchi-type identity'. This was first proved in \cite[Theorem 4.2]{karigiannis-spin7}, using the diffeomorphism invariance of the torsion tensor. A different proof can be found in~\cite[Theorem 3.9]{DLSE}, using the Ricci identity
\begin{equation*} 
    \nabla_{k} \nabla_{i} X_l - \nabla_{i} \nabla_{k} X_l 
    = - R_{kilm} X_m.
\end{equation*}

\begin{theorem}\label{thm:spin7bianchi}
The torsion tensor $T$ satisfies the following `Bianchi-type identity'
\begin{align}
\label{spin7bianchi}
    \del_iT_{j;ab}-\del_jT_{i;ab}=2T_{i;am}T_{j;mb}-2T_{j;am}T_{i;mb}+\frac 14R_{jiab}-\frac 18R_{jimn}\Phi_{mnab}.  
\end{align}
\end{theorem}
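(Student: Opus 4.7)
\medskip\noindent\textbf{Proof plan.} The approach is to differentiate the explicit formula \eqref{eq:Texpress} for the torsion and antisymmetrize in the two differentiation indices. By the Leibniz rule,
\begin{align*}
\del_i T_{j;ab} - \del_j T_{i;ab}
&= \frac{1}{96}\bigl[(\del_i\del_j - \del_j\del_i)\Phi_{apqr}\bigr]\Phi_{bpqr} \\
&\quad + \frac{1}{96}\bigl[(\del_j\Phi_{apqr})(\del_i\Phi_{bpqr}) - (\del_i\Phi_{apqr})(\del_j\Phi_{bpqr})\bigr],
\end{align*}
so the right-hand side cleanly splits into a curvature piece (the commutator of covariant derivatives) and a quadratic-in-torsion piece, once \eqref{Tdefneqn} is used to expand $\del\Phi$. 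I will compute each bracket separately and recombine.

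For the curvature bracket, the Ricci identity applied to the $4$-form $\Phi$ gives
$(\del_i\del_j - \del_j\del_i)\Phi_{apqr} = -R_{ijae}\Phi_{epqr} - R_{ijpe}\Phi_{aeqr} - R_{ijqe}\Phi_{aper} - R_{ijre}\Phi_{apqe}$.
Contracting with $\Phi_{bpqr}$, the first summand produces $-42\,R_{ijab}$ via \eqref{eq:impiden3}, while the total skew-symmetry of $\Phi$ in the summed triple $(p,q,r)$ makes each of the remaining three summands reduce, by \eqref{eq:impiden2}, to the common expression $6\,R_{ijab} + 4\,R_{ijmn}\Phi_{mnab}$. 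Summing and using the antisymmetry $R_{ijab}=-R_{jiab}$ gives $24\,R_{jiab} - 12\,R_{jimn}\Phi_{mnab}$, and division by $96$ yields exactly the curvature portion of \eqref{spin7bianchi}.

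For the quadratic bracket, substitute \eqref{Tdefneqn} into each of the factors $\del_i\Phi$ and $\del_j\Phi$, producing sixteen quartic terms of the shape $TT\Phi\Phi$ per product. Antisymmetrization in $(i,j)$ kills the $(T_i,T_j)$-symmetric blocks, and the remaining terms are systematically simplified using the contraction identities \eqref{eq:impiden1}--\eqref{eq:impiden4} together with the $\Omega^2_7$-identity $T_{m;ab}\Phi_{abcd}=-6\,T_{m;cd}$ coming from \eqref{eq:27decomp2}. After collection all $\Phi$-factors disappear from the final answer, and what remains collapses to $192\bigl(T_{i;am}T_{j;mb}-T_{j;am}T_{i;mb}\bigr)$; the overall $\frac{1}{96}$ prefactor then furnishes the quadratic torsion term of \eqref{spin7bianchi}. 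The main obstacle is precisely this sixteen-term book-keeping: one must be attentive to which diamond index in each $\del\Phi$ is summed against $\Phi_{bpqr}$, and must invoke the $\Omega^2_7$-constraint at the right moment to collapse four-index $T\Phi$-combinations back to two-index torsion products. Once this is carried out cleanly, the curvature and quadratic pieces add to give \eqref{spin7bianchi}.
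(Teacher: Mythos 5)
Your plan follows the same strategy the paper attributes to its own proof (\cite[Theorem 3.9]{DLSE}): differentiate the explicit torsion formula \eqref{eq:Texpress}, antisymmetrize in the derivative indices, and invoke the Ricci identity applied to the $4$-form $\Phi$. Your treatment of the curvature bracket checks out: the first summand of the commutator gives $-42\,R_{ijab}$ by \eqref{eq:impiden3}, and each of the other three, after a permutation of the summed triple $(p,q,r)$, reduces via \eqref{eq:impiden2} to $6R_{ijab}+4R_{ijmn}\Phi_{mnab}$, so the total is $-24R_{ijab}+12R_{ijmn}\Phi_{mnab}=24R_{jiab}-12R_{jimn}\Phi_{mnab}$, which divided by $96$ is exactly the curvature part of \eqref{spin7bianchi}. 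For the quadratic bracket you only sketch the sixteen-term expansion. The claimed collapse to $192\bigl(T_{i;am}T_{j;mb}-T_{j;am}T_{i;mb}\bigr)$ is the correct target, but be aware that products like $A_kB_l$ with $k,l\ge 2$ produce residual pieces of the shape $T_{j;pe}T_{i;pf}\Phi_{aebf}$ which do \emph{not} reduce via the $\Omega^2_7$-constraint $T_{m;ab}\Phi_{abcd}=-6T_{m;cd}$ alone; you will also need the triple contraction identity \eqref{eq:impiden1} to eliminate them before the $\Phi$-factors disappear. That step is where the calculation actually lives, so it deserves to be carried out rather than asserted.
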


Using the Riemannian Bianchi identity, we see  that
\begin{align*}
R_{ijkl}\Phi_{ajkl}=-(R_{jkil}+R_{kijl})\Phi_{ajkl}=-R_{iljk}\Phi_{aljk}-R_{ikjl}\Phi_{akjl},    
\end{align*}
hence 
\begin{align*}
    R_{ijkl}\Phi_{ajkl}=0.    
\end{align*}
Using this and contracting \eqref{spin7bianchi} on $j$ and $b$ gives the expression for the Ricci curvature of a metric induced by a $\spi$-structure:
\begin{align}
\label{ricci}
    R_{ij}=4\del_iT_{a;ja}-4\del_aT_{i;ja}-8T_{i;jb}T_{a;ba}+8T_{a;jb}T_{i;ba}.   
\end{align}
This also proves that the metric of a torsion-free $\spi$-structure is Ricci-flat, a result originally due to Bonan~\cite{bonan1}. Taking the trace of \eqref{ricci} gives the  scalar curvature $R$:
\begin{align*}
    R=4\del_iT_{a;ia}-4\del_aT_{i;ia}+8|T|^2+8T_{a;jb}T_{j;ba}.    
\end{align*}

\begin{remark}
\begin{enumerate}
\item A classification of $\spi$-structures was given by Fernandez in~\cite{fernandez-spin7} and a formulation in terms of spinors can be found in \cite{Martin-Merchan}. 
\item Compact simply-connected Riemannian symmetric spaces cannot carry any invariant $\spi$-structures and the compact simply-connected almost effective homogeneous space with invariant $\spi$-structures are ${\mathrm{SU}(3)}/\{e\}$, some torus bundles over 
$\Big({\mathrm{SU}(2)}/{\mathrm{U}(1)}\Big)^{\times 3}$ and the Calabi-Eckmann ${\mathrm{SU}(3)}/{\mathrm{SU}(2)}\times \mathrm{SU}(2)$.
\item Without requiring invariance of the structure, the $8$-dimensional compact simply-connected Riemannian symmetric spaces admitting a $\spi$-structures are $\mathrm{SU}(3)$, $\sn^3\times\sn^3\times\sn^2$, $\sn^5\times\sn^3$, $\mathbb{HP}^2$, $\mathrm{Gr}_{2}({\mathbb C}^4)$ and the Wolf space ${{\mathrm G}_2}/{\mathrm{SO}(4)}$~\cite{ACFR}.
\end{enumerate}
\end{remark}

\section{Harmonicity}

The ultimate goal in $\mathrm{Spin}(7)$-geometry is to find parallel structures. Not only is it quite a difficult task involving a non-linear equation and hard analysis but topological obstructions also apply.

An alternative strategy to finding the best among all possible $\mathrm{Spin}(7)$-structures is to introduce a variational problem, for example measuring the default of parallelism, and search for minimisers.

This is the junction point between $\mathrm{Spin}(7)$-geometry and harmonic map theory, though the price to pay is we need to fix the metric, i.e. work within the isometric class of $\mathrm{Spin}(7)$-structures.

\begin{definition}
Two $\mathrm{Spin}(7)$-structures $\Phi_1$ and $\Phi_2$ on $M$ are called \emph{isometric} if they induce the same Riemannian metric, that is, if $g_{\Phi_1}=g_{\Phi_2}$. We will denote by $\llbracket \Phi\rrbracket$ the space of $\mathrm{Spin}(7)$-structures that are isometric to a given $\mathrm{Spin}(7)$-structure $\Phi$.
\end{definition}

\begin{definition}
Let $\Phi_0$ be a fixed initial $\mathrm{Spin}(7)$-structure on $M$. The \emph{energy functional} $E$ on the set $\llbracket \Phi_0 \rrbracket$ is
\begin{align}
\label{energyfuncdefn}
    E(\Phi)=\frac 12 \int_M |T_{\Phi}|^2 \vol_{g_{\Phi}} ,
\end{align}
where $T_{\Phi}$ is the torsion of $\Phi$.
\end{definition}

Once the variational problem has been delineated, the next step is to derive the corresponding Euler-Lagrange equation. We will call critical points of $E\large|_{\llbracket \Phi_0 \rrbracket}$ {\em harmonic $\mathrm{Spin}(7)$-structures} and work out the harmonic equation for $\mathrm{Spin}(7)$-structures and the corresponding isometric flow.

The main ingredient is the representation theory properties outlined in the previous section and the recipe is to follow the treatment of the $\mathrm{G}_2$ case in \cite[Section 6]{LSE}, only slightly adapted to specific properties of $\mathrm{Spin}(7)$-geometry.

The link with harmonic map theory is the one-one correspondence between $\spi$-structures $\Phi$ and sections $\sigma$ of an ad-hoc $\spi$-twistor bundle $N$, constructed as the $\spi$ quotient of the $\mathrm{SO}(8)$ frame bundle of $(M^8,g)$. The fibres are isometric to $\mathbb{RP}^7$ and parametrise isometric $\spi$-structures on $(M^8,g)$.

To obtain the equation of harmonicity, one must first and foremost identify the tangent space of fibres in order to be able to consider vertical variations and compare (isometric) $\spi$-structures among themselves.

The first constituent is the connection form $f$, which identifies the vertical of the tangent bundle of the ``twistor space'' with $\mathfrak{m}$ the (naturally reductive) complement of $\mathfrak{so}(7) (= \mathfrak{spin}(7))$ in $\mathfrak{so}(8)$. Sections of this space correspond to $\mathrm{Spin}(7)$-structures and restricting ourselves to the vertical part means we only look at variations through $\mathrm{Spin}(7)$-structures.

If $\tilde{\Phi}$ is the universal $\mathrm{Spin}(7)$-structure, a sort of ideal $\spi$-structure living a couple of fibre bundles above the manifold $M$ (cf. \cite{LSE} for particulars), then the connection form is characterised by
$$
\nabla_A \tilde{\Phi} = f(A).\tilde{\Phi} .
$$
Here $f(A)$ is in $\mathfrak{m}$.

We identify $\mathfrak{so}(8)$ with $\Omega^2$ and $\mathfrak{m}$ is then identified with $\Omega^2_7$.

Since $\tilde{\Phi}$ is in $\Omega^4$ (of the appropriate space), the term $f(A).\tilde{\Phi}$ should be understood as the diamond operator of Equation~\eqref{eq:diadefn1}, which is just the derivation of the natural action of $\mathrm{GL}(8)$ by pulling back forms.

To obtain $f$ we need to find an inverse of the $\diamond$ operator and to do this introduce the triple contraction $\lrcorner_3$ between two four-forms (we follow notations and conventions of \cite{karigiannis-spin7}):

If $\beta = \tfrac12 \beta_{ij} dx^i \wedge dx^j$ then $\beta \diamond \Phi \in \Omega^4$ and put $(\beta \diamond \Phi) \lrcorner_3 \Phi$ to be the two-form defined by 
$$
(\beta \diamond \Phi) \lrcorner_3 \Phi = \frac12 ((\beta \diamond \Phi) \lrcorner_3 \Phi)_{pq} dx^p \wedge dx^q ,
$$
where
$$
((\beta \diamond \Phi) \lrcorner_3 \Phi)_{pq} = 
(\beta \diamond \Phi)_{pijk} \Phi_{qijk} .
$$
Because we are interested in the case $\beta = f(A) \in \Omega^2_7$, we can use  $\beta_{ab}\Phi_{abij} = -6 \beta_{ij}$ to compute that
$$
(\beta \diamond \Phi) \lrcorner_3 \Phi = 96 \beta.
$$

Once we have this, the rest follows relatively easily, if one knows where to pick information in \cite{karigiannis-spin7}:
\begin{itemize}
    \item The connection form is then given by
$$
96 f(A) = \nabla_A \tilde{\Phi} \lrcorner_3 \tilde{\Phi}
$$
and, since $\mathrm{Spin}(7)$-structures $\Phi$ and sections $\sigma: M \to N$ of the twistor space are related by $\Phi = \tilde{\Phi}\circ\sigma$, we can pull back the above formula to obtain
$$
f(d\sigma(X)) = \tfrac{1}{96} (\nabla_{X}\Phi) \lrcorner_3 \Phi
$$
which is precisely the torsion $T(X)$ of \eqref{eq:Texpress} in the space $\Omega^2_7$.
\item The (vertical) energy density of the section $\sigma: M \to $ N is
$$
|d^v\sigma|^2 = |T|^2,
$$
so the functional we take, the $L^2$-norm of the torsion, is exactly the Dirichlet energy of $\sigma$ (at least up to a constant due to the contribution of the horizontal part).
\item The vertical tension field is 
$$
I(\tau^v(\sigma)) = \sum_1^8 \nabla_{e_i} (T(e_i)) - T(\nabla_{e_i} e_i) = \Div T .
$$
\item The flow of sections $\sigma_t : M \to N$
$$
\frac{d\sigma_t}{dt} = \tau^v(\sigma_t)
$$  
is equivalent to
$$
I(\frac{d\sigma_t}{dt}) = I(\tau^v(\sigma_t)) ,
$$
where $I$ plays the role of an extended $f$.
We know that $I(\tau^v(\sigma_t))= \Div T_t$ and, generalising to $M\times \bR$ (or at least on an interval) all the previous objects, we have that 
$$
I(\frac{d\sigma_t}{dt}) = \tfrac{1}{96} \frac{d{\Phi}_t}{dt} \lrcorner_3 {\Phi}_t .
$$
On the other hand, since $\Div T_t$ is in $\Omega^2_7$
$$
\Div T_t = \tfrac{1}{96} (\Div T_t \diamond {\Phi}_t) \lrcorner_3 {\Phi}_t ,
$$
and $\lrcorner_3 {\Phi}_t$ is an isomorphism on $\Omega^2_7$ (its kernel is $\Omega^2_{21}$), we have the isometric flow, with initial value:
\begin{align} 
\label{eq: Har Spin(7) Flow} 
 \left\{\begin{array}{rl} 
      & \frac{d \Phi}{d t} = \Div T \diamond \Phi \\
      & \Phi(0) =\Phi_0 .
      \tag{HF}
   \end{array}\right.
\end{align}
\end{itemize}

\begin{remark}
\begin{itemize}
\item The $\Div T$ equation is the vertical part of the harmonic map equation of $\sigma$, which is known to admit short-time existence, so this property carries over to our heat flow.
\item As (the fibres of) the target are isometric to the real seven-dimensional projective space, they have positive sectional curvature, so there can be no certainty about the long-time existence of the flow (cf. \cite{ES}).
\item Solitons of such flows are studied for a general group $H$ in \cite{DLSE} and \cite{FLMSE}.
\end{itemize}
\end{remark}

\section{Analysis of the flow I}

This section develops tools for the analysis of the isometric flow of $\spi$-structures. Some proofs of the statements in this section have appeared in full in \cite{DLSE} and we refer to them. Others were consequences of more general arguments and here we present their $\spi$ versions, though they are only adaptations of their $\mathrm{G_2}$ counterparts found in~\cite{DGK}.

Let $\{ \partial_t, e_1, \dots , e_7\}$ be an orthonormal (geodesic) frame. First, we use the formula
$$
(R(e_i,e_j)T)(e_a,e_b,e_c) = - T(R(e_i,e_j)e_a,e_b,e_c) - T(e_a,R(e_i,e_j)e_b,e_c) - T(e_a,e_b,R(e_i,e_j)e_c)
$$
to derive a formula for the Laplacian of the torsion of a $\mathrm{Spin}(7)$-structure.

\begin{lemma} \cite[Lemma 4.12]{DLSE}
Let $\Delta = \tr{\nabla_{e_i}\nabla_{e_i}}$ be the Laplacian, then
 \begin{align*}
   (\Delta T)_{m;ab} =& \nabla_m \nabla_i T_{i;ab} - T_{q;ab}R_{imiq} - T_{i;qb}R_{imaq} - T_{i;aq}R_{imbq} + 2 \nabla_i T_{i;ap}T_{m;bp} + 2 T_{i;ap}\nabla_i T_{m;bp} \\
    &  - 2 \nabla_i T_{m;ap}T_{i;bp} - 2 T_{m;ap}\nabla_i T_{i;pb} + \tfrac14 \nabla_i R_{miab} - \tfrac18 \nabla_i R_{mipq} \Phi_{pqab} - \tfrac18 R_{mipq} \nabla_i \Phi_{pqab}.
\end{align*}
\end{lemma}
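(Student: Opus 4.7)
The plan is to compute $\Delta T_{m;ab}=\nabla_i\nabla_i T_{m;ab}$ using only two ingredients already at our disposal: the Spin(7) Bianchi-type identity of Theorem \ref{thm:spin7bianchi} and the Ricci identity for commuting two covariant derivatives. The key observation is that the Bianchi identity lets us trade a derivative of $T$ in the direction $i$ (the index we eventually want to contract with $\nabla_i$) for a derivative in the direction $m$, up to quadratic torsion and curvature corrections; once we do this, a commutator argument produces the divergence $\nabla_m\nabla_i T_{i;ab}$ plus a manageable list of curvature terms.

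First, I would rename the free index $j\to m$ in \eqref{spin7bianchi} to get
\begin{align*}
\nabla_i T_{m;ab}
=\nabla_m T_{i;ab}+2T_{i;ap}T_{m;pb}-2T_{m;ap}T_{i;pb}
+\tfrac14 R_{miab}-\tfrac18 R_{mipq}\Phi_{pqab}.
\end{align*}
Next, I would apply $\nabla_i$ to both sides and sum. The linear curvature terms differentiate directly, the mixed term $R_{mipq}\Phi_{pqab}$ yields two pieces via Leibniz (one hitting $R$, one hitting $\Phi$), and the two quadratic torsion terms each split into a factor with $\nabla_i T_{i;\cdot\cdot}$ and a factor with $\nabla_i T_{m;\cdot\cdot}$. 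The resulting expression contains $\nabla_i\nabla_m T_{i;ab}$ rather than the desired $\nabla_m\nabla_i T_{i;ab}$.

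To finish, I would swap the order of derivatives using the Ricci identity extended to the $(0,3)$-tensor $T$: since $T_{i;ab}$ has three covariant slots, the commutator produces exactly three curvature contractions,
\begin{align*}
\nabla_i\nabla_m T_{i;ab}-\nabla_m\nabla_i T_{i;ab}
=-R_{imiq}T_{q;ab}-R_{imaq}T_{i;qb}-R_{imbq}T_{i;aq},
\end{align*}
one for each slot of $T$. Substituting this into the output of the previous step gives precisely the formula stated in the lemma, after using the antisymmetry $T_{\cdot;\alpha\beta}=-T_{\cdot;\beta\alpha}$ to present the quadratic-torsion terms in the conventional form.

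The main obstacle, and really the only non-trivial point, is the bookkeeping: keeping track of which index is free versus summed in each of the four quadratic terms, and checking that no spurious curvature contraction is introduced by the Ricci identity (in particular that there is no contribution from differentiating the slot labelling the $1$-form part of $T$ beyond the three listed). Once the indices are disciplined, the result falls out with no further representation-theoretic input; the structure of $\Omega^2_7$ is hidden inside the Bianchi identity itself, and is not called upon again.
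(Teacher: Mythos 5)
Your strategy---differentiating and contracting the Bianchi-type identity \eqref{spin7bianchi}, then commuting $\nabla_i\nabla_m$ to $\nabla_m\nabla_i$ via the Ricci identity for a $(0,3)$-tensor---is precisely what the paper does; indeed, the text introduces that very Ricci identity for curvature acting on a $(0,3)$-tensor immediately before the lemma for exactly this purpose. Do double-check the sign bookkeeping when you pass from the $T_{\cdot;pb}$ that falls out of Leibniz on \eqref{spin7bianchi} to the $T_{\cdot;bp}$ appearing in the stated formula, since a naive use of $T_{\cdot;\alpha\beta}=-T_{\cdot;\beta\alpha}$ does not obviously reproduce the printed mixture of index orders (three terms written with $T_{\cdot;bp}$ and the divergence term with $T_{i;pb}$).
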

This allows us to compute a local expression for the evolution of the torsion $T$.
\begin{proposition}\cite[Proposition 4.13]{DLSE}
\label{prop: evolution of T}
    Let $\{\Phi_t\}$ be a solution of the harmonic $\spi$-flow \eqref{eq: Har Spin(7) Flow}, then its torsion evolves according to the equation
\begin{align*}
    4\frac{\partial}{\partial t} T_{m;is} 
    =&\ 
    4(\Delta T)_{m;is} \\
    &+ \nabla_a T_{m;bc}\Big(4T_{a;bp}\Phi_{pcis}+T_{a;ip}\Phi_{bcps}+T_{a;sp}\Phi_{bcip} \Big) + T_{m;bc} \nabla_a T_{a;bp} \Phi_{pcis} \\
    &+ 3\nabla_a T_{a;ip} T_{m;ps} + \nabla_a T_{a;sp} T_{m;pi} - 2 T_{a;ip}\nabla_a T_{m;sp} + 2 \nabla_a T_{m;ip}T_{a;sp} \\
    & + T_{m;bc} T_{a;bp} \Big(T_{a;pq}\Phi_{qcis}+ T_{a;cq}\Phi_{pqis}+2T_{a;iq}\Phi_{pcqs} +
    2T_{a;sq}\Phi_{pciq} \Big) \\
    &+  \tfrac12 T_{m;bc}T_{a;ip} \Big(
    T_{a;pq}\Phi_{bcqs}+
    2T_{a;sq}\Phi_{bcpq} \Big) 
    + \tfrac12 T_{m;bc} T_{a;sp}
    T_{a;pq}\Phi_{bciq} \\
    & + 4 T_{q;is}R_{amaq}
    - (\nabla_a R_{mais} - \tfrac12 \nabla_a R_{mapq} \Phi_{pqis})\\
    &+ T_{a;qs}R_{amiq} + T_{a;iq}R_{amsq}  
    + \tfrac18 R_{mapq} \nabla_a \Phi_{pqis}  - T_{a;qc}R_{ambq}\Phi_{bcis} 
    - \tfrac{1}{16} R_{mapq} \nabla_a \Phi_{pqbc}\Phi_{bcis}.
\end{align*}
\end{proposition}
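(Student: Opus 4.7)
The plan is to differentiate the explicit formula \eqref{eq:Texpress} for $T$ in time and then unpack everything using the contraction identities \eqref{eq:impiden1}--\eqref{eq:impiden4}, the Bianchi-type identity \eqref{spin7bianchi}, and the Laplacian formula of the preceding lemma. A key simplification, which makes the whole computation tractable, is that the flow is \emph{isometric}: the induced metric $g_\Phi$ and the Levi-Civita connection $\del$ are both independent of $t$, so time and covariant derivatives commute.

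First, I would write
\begin{equation*}
96\,\partial_t T_{m;ab} \;=\; (\del_m \partial_t \Phi_{ajkl})\,\Phi_{bjkl} + (\del_m \Phi_{ajkl})\,\partial_t \Phi_{bjkl},
\end{equation*}
and substitute $\partial_t \Phi = \Div T \diamond \Phi$ together with $\del_m \Phi = T_m \diamond \Phi$. For the first summand, Leibniz yields $\del_m(\Div T \diamond \Phi) = (\del_m \Div T)\diamond \Phi + \Div T \diamond (T_m \diamond \Phi)$. Since $\Div T$ lies in $\Omega^2_7$, the inversion identity $(\beta\diamond\Phi)\lrcorner_3 \Phi = 96\,\beta$ extracts $96\,\del_m\del_i T_{i;ab}$ as the principal term, once $(\del_m\Div T)\diamond \Phi$ is paired with $\Phi_{bjkl}$ via \eqref{eq:impiden2}--\eqref{eq:impiden3}.

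Next, I would invoke the Laplacian lemma to rewrite $\del_m\del_i T_{i;ab}$ as $(\Delta T)_{m;ab}$ plus the single-curvature contractions ($R_{imiq}T_{q;ab}$ and its permutations in $a,b$), the covariant-derivatives-of-curvature pieces ($\del_i R_{miab}$, $\del_i R_{mipq}\Phi_{pqab}$ and $R_{mipq}\del_i\Phi_{pqab}$), and the mixed $\del T \cdot T$ terms already present in that lemma. These account for all the $\Delta T$, curvature and divergence-times-$T$ contributions in the statement after a careful relabelling of indices. The remaining blocks---the second summand $(\del_m\Phi)\,\partial_t\Phi$ in the initial identity, and the lower-order piece $\Div T \diamond (T_m\diamond\Phi)$ from the Leibniz expansion---are pure algebraic reductions. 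Each is a triple product of four-forms that one expands via \eqref{eq:diadefn2} and then collapses with \eqref{eq:impiden1}--\eqref{eq:impiden4}, yielding exactly the cubic-in-$T$ and $\del T \cdot T \cdot \Phi$ terms visible in the proposition. Wherever an antisymmetric combination $\del_i T_{j;\bullet} - \del_j T_{i;\bullet}$ arises, I would apply \eqref{spin7bianchi} to trade it for quadratic $T$-terms plus the curvature expressions $R_{ambq}\Phi_{bcis}$ and $R_{mapq}\del_a\Phi_{pqbc}\Phi_{bcis}$ that close out the statement.

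The main obstacle is combinatorial rather than conceptual: each triple-$\Phi$ contraction via \eqref{eq:impiden1} produces fifteen distinct terms, and it is easy to lose control of signs or to pollute the $\Omega^2_7$-valued torsion with $\Omega^2_{21}$ contributions. Organising the bookkeeping around the orthogonal splitting $\Omega^2 = \Omega^2_7 \oplus \Omega^2_{21}$, and symmetrising consistently in $(i,s)$---exploiting the antisymmetry $T_{m;is} = -T_{m;si}$ inside $\Omega^2_7$---is what ultimately forces the final expression into the rather asymmetric form stated.
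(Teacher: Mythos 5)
Your plan is correct and follows essentially the same route the paper takes: differentiate the explicit formula $T_{m;ab}=\tfrac{1}{96}(\nabla_m\Phi_{ajkl})\Phi_{bjkl}$ in time (using that the flow is isometric so $\partial_t$ and $\nabla$ commute), substitute $\partial_t\Phi=\Div T\diamond\Phi$ and $\nabla_m\Phi=T_m\diamond\Phi$, invoke the preceding Laplacian lemma to trade $\nabla_m\nabla_iT_{i;ab}$ for $(\Delta T)_{m;ab}$ plus curvature corrections, and reduce the remaining $\diamond$-products with the contraction identities and the Bianchi-type identity. One small caveat worth keeping in mind when you carry out the contraction $\big[(\nabla_m\Div T)\diamond\Phi\big]\lrcorner_3\Phi$: although $\Div T\in\Omega^2_7$, its covariant derivative $\nabla_m\Div T$ is \emph{not} purely in $\Omega^2_7$ (the splitting is $\Phi$-dependent and $\Phi$ is not parallel), so the pairing yields $96\,\nabla_m\Div T$ \emph{plus} an extra $(\Div T)\cdot\nabla_m\Phi$ piece — precisely the $\nabla T*T*\Phi$-type term you should expect to find among the lower-order corrections.
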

But the real information is the evolution of the norm of the torsion.
\begin{proposition}\cite[Proposition 4.14]{DLSE}
\label{prop:evolnormT}
    If $\{\Phi_t\}$ is a solution of the harmonic $\mathrm{Spin}(7)$-flow \eqref{eq: Har Spin(7) Flow}, then the evolution equation for $|T|^2$ is
\begin{align*}
  2\frac{\partial}{\partial t} |T|^2 &= 2\Delta |T|^2 - 4 |\nabla T|^2 
   + 16 T_{a;bp} T_{m;bc} T_{a;pq} T_{m;qc} 
   + 16 T_{a;bp} T_{m;bc} T_{a;cq} T_{m;pq} \\
  & \quad  + 16 T_{a;qs} T_{m;is} R_{amiq} 
  + 4 T_{q;is} T_{m;is} R_{amaq} 
  -4 T_{m;is} \nabla_a R_{mais}. 
\end{align*}
\end{proposition}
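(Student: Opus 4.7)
The plan is to contract the formula of Proposition~\ref{prop: evolution of T} with $T_{m;is}$. Since the isometric flow~\eqref{eq: Har Spin(7) Flow} fixes the metric, the Levi-Civita connection is $t$-independent and $2\partial_t|T|^2 = 4T_{m;is}\partial_t T_{m;is}$; multiplying the entire equation by $T_{m;is}$ therefore produces exactly $2\partial_t|T|^2$ on the left. The Laplacian contribution is immediate from the Bochner identity $2T_{m;is}(\Delta T)_{m;is} = \Delta|T|^2 - 2|\nabla T|^2$ and reproduces the first two terms of the statement.

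The rest of the argument is a sequence of pointwise index manipulations driven by two structural facts: (i) the antisymmetry of $T_{m;\bullet\bullet}$ in its last two slots, and (ii) the $\Omega^2_7$ membership of $T_{m;\bullet\bullet}$, which via~\eqref{eq:27decomp2} gives the identity $T_{m;is}\Phi_{isab} = -6\,T_{m;ab}$ and hence collapses any pairing of $T_{m;is}$ with a $\Phi$ that carries \emph{both} of its indices. The six terms linear in $\nabla T$ (lines 2--3 of the RHS of Proposition~\ref{prop: evolution of T}) are killed by (i)--(ii) in combination: for instance $T_{m;is}T_{m;ps}\nabla_a T_{a;ip}$ is the contraction of the symmetric tensor $T_m T_m^{\top}$ against the antisymmetric $\nabla_a T_{a;ip}$ and so vanishes, while $T_{m;is}T_{m;bc}\nabla_a T_{a;bp}\Phi_{pcis}$ reduces to the same template after the application of $T_{m;is}\Phi_{pcis} = -6T_{m;pc}$.

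For the purely quartic $T^4$ and $T^3\Phi$ block (lines 4--6), the template handles immediately every summand in which $\Phi$ carries both the $(i,s)$ indices; the summands in which $\Phi$ carries only one of them are first reshaped using the contraction identity~\eqref{eq:impiden1} to expose the $T_{m;is}\Phi_{is\cdot\cdot}$ pattern, after which (ii) eliminates the last $\Phi$. The resulting algebraic $T^4$ expression consolidates under dummy-index relabeling and antisymmetrisation into the two stated combinations $16T_{a;bp}T_{m;bc}T_{a;pq}T_{m;qc}$ and $16T_{a;bp}T_{m;bc}T_{a;cq}T_{m;pq}$. The curvature $T^2\cdot R$ terms are treated similarly, using in addition the first Bianchi identity and its consequence $R_{ijkl}\Phi_{ajkl}=0$ recorded just before~\eqref{ricci} to absorb the mixed $T^2 R\Phi$ piece $T_{a;qc}R_{ambq}\Phi_{bcis}$; the two parasite summands $\tfrac18 R_{mapq}\nabla_a\Phi_{pqis}$ and $-\tfrac1{16}R_{mapq}\nabla_a\Phi_{pqbc}\Phi_{bcis}$ are rewritten via~\eqref{Tdefneqn} — which replaces $\nabla\Phi$ by a diamond product with $T$ — so that they join the $T^2 R$ pool. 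Finally, contracting the two $\nabla R$ summands $-\nabla_a R_{mais} + \tfrac12\nabla_a R_{mapq}\Phi_{pqis}$ with $T_{m;is}$ and applying $T_{m;is}\Phi_{pqis} = -6T_{m;pq}$ gives $-T_{m;is}\nabla_a R_{mais} - 3T_{m;pq}\nabla_a R_{mapq} = -4T_{m;is}\nabla_a R_{mais}$, exactly the last term of the statement.

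The only real obstacle is bookkeeping: Proposition~\ref{prop: evolution of T} carries order twenty multi-index summands, and the cancellations among the $T^2\nabla T$ pairs and then again among the quartic $T^3\Phi$ pairs are delicate. A disciplined relabeling scheme and systematic use of (i), (ii) and the identities \eqref{eq:impiden1}--\eqref{eq:impiden2} is essential if one is not to lose or double-count terms; no new geometric ingredient beyond the identities of Section~2 is needed.
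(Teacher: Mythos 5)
Your overall strategy is the right one (and surely the paper's): contract Proposition~\ref{prop: evolution of T} with $T_{m;is}$, use $4T_{m;is}\partial_t T_{m;is}=2\partial_t|T|^2$ on the left, Bochner for the Laplacian, and simplify the right-hand side using the skew-symmetry of $T_{m;\cdot\cdot}$ and the $\Omega^2_7$ identity. Your treatment of the $\nabla R$ piece in particular is clean and correct, and the coefficient $-4$ comes out exactly as stated.

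However, there is a genuine gap in the crucial step. You assert that \emph{all} the terms linear in $\nabla T$ — you say six, but there are eight — ``are killed by (i)--(ii) in combination''. The symmetric-versus-antisymmetric argument you give only applies to the three summands containing the divergence $\nabla_a T_{a;\cdot\cdot}$, namely $T_{m;bc}\nabla_a T_{a;bp}\Phi_{pcis}$, $3\nabla_a T_{a;ip}T_{m;ps}$, and $\nabla_a T_{a;sp}T_{m;pi}$: after contracting with $T_{m;is}$ (and collapsing $\Phi$ via the $\Omega^2_7$ identity where needed), each becomes a symmetric $T_{m;\cdot\cdot}T_{m;\cdot\cdot}$ contracted against the antisymmetric $\Div T$, hence vanishes. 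The remaining five summands — the three in $\nabla_a T_{m;bc}\big(4T_{a;bp}\Phi_{pcis}+T_{a;ip}\Phi_{bcps}+T_{a;sp}\Phi_{bcip}\big)$ and the two in $-2T_{a;ip}\nabla_a T_{m;sp}+2\nabla_a T_{m;ip}T_{a;sp}$ — do \emph{not} carry $\Div T$, and after applying $T_{m;is}\Phi_{\cdot\cdot is}=-6T_{m;\cdot\cdot}$ and $\nabla\Phi=T\diamond\Phi$ they assemble into a nonzero residual proportional to $T_{a;uw}T_{m;wv}\nabla_a T_{m;uv}$ (a fixed multiple, $-16$). This quantity is the contraction of $T_{a;uw}$ with the matrix product $T_aT_m$ against $\nabla_a T_m$ and does not vanish pointwise by any symmetry of $T$ alone. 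Eliminating it requires antisymmetrising the outer pair $(a,m)$ and invoking the $\spi$-Bianchi-type identity, Theorem~\ref{thm:spin7bianchi} / Eq.~\eqref{spin7bianchi}, to convert $\nabla_a T_{m;\cdot\cdot}-\nabla_m T_{a;\cdot\cdot}$ into $T*T$ and curvature terms; these then merge with your quartic and $T^2R$ blocks to produce the coefficients $16$ and $4$ of the statement. You use the Riemannian Bianchi identity for $R$ but never Theorem~\ref{thm:spin7bianchi} for $T$, which is precisely the ingredient that makes the $T^2\nabla T$ block disappear. As written, the derivation stalls at that point.
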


Both the doubling-time estimate and Shi-type estimates can be derived from general properties of the harmonic flow of $H$-structures (cf.~\cite{FLMSE}) and do not feature in \cite{DLSE} but proofs specific to $\spi$ can be written.

\begin{lemma}\cite[Corollary 4.9]{DLSE}
 There exists $\delta >0$ such that
 $$
 \cT(t) \leq 2 \cT(0)
 $$
 for all $0\leq t \leq \delta$, where 
 $$
 \cT(t) = \sup_{M} |T(x,t)|
 $$
\end{lemma}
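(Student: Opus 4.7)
The plan is to control $\mathcal{T}(t)^2 = \sup_M |T(\cdot,t)|^2$ via a parabolic maximum principle applied to the evolution equation of Proposition~\ref{prop:evolnormT}.

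First I would discard the non-positive term $-4|\nabla T|^2$ on the right-hand side and bound each surviving term pointwise. The four quartic pieces in $T$ are controlled by $C_0|T|^4$ via Cauchy--Schwarz on the indices. The curvature--torsion terms couple two copies of $T$ with $R$; since we flow inside the isometric class $\llbracket\Phi_0\rrbracket$, the metric $g=g_{\Phi_0}$ is \emph{fixed}, so on the compact base $M$ we have uniform bounds $|R|\le K_0$ and $|\nabla R|\le K_1$. This gives $|T\cdot R\cdot T|\le K_0|T|^2$ and $|T\cdot\nabla R|\le K_1|T|$. The result is the pointwise inequality
$$
\frac{\partial}{\partial t}|T|^2 \le \Delta |T|^2 + C\bigl(|T|^4 + |T|^2 + |T|\bigr),
$$
with $C$ depending only on $g$ (through $K_0$ and $K_1$).

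Second, I would pass from this pointwise inequality to a scalar one. Using Hamilton's trick (the upper right Dini derivative of the smoothly moving supremum of $|T|^2$ on a compact manifold is attained at a spatial maximum, where $\Delta|T|^2\le 0$) I obtain
$$
\frac{d}{dt}\mathcal{T}(t)^2 \le C\bigl(\mathcal{T}(t)^4 + \mathcal{T}(t)^2 + \mathcal{T}(t)\bigr).
$$
Comparison with the ordinary differential equation $\dot v = C(v^2+v+\sqrt v)$, $v(0)=\mathcal{T}(0)^2$, whose solution is smooth and strictly increasing for a short time, together with the ODI comparison principle, then furnishes a $\delta>0$ (depending on $\mathcal{T}(0)$, $K_0$, $K_1$) such that $v(t)\le 4v(0)$ on $[0,\delta]$. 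Hence $\mathcal{T}(t)^2 \le 4\mathcal{T}(0)^2$, i.e.\ $\mathcal{T}(t)\le 2\mathcal{T}(0)$, on $[0,\delta]$.

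I do not foresee a substantive obstacle: this is the standard doubling-time argument. The points requiring the most care are the bookkeeping involved in identifying precisely which of the terms in Proposition~\ref{prop:evolnormT} contribute to each of the $|T|^4$, $|T|^2$, $|T|$ powers, and the invocation of Hamilton's trick to handle the possible non-smoothness of $t\mapsto \mathcal{T}(t)^2$. On a non-compact $M$ the same scheme would proceed via the Omori--Yau maximum principle under standard geometric hypotheses.
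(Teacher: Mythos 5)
Your proposal follows essentially the same strategy as the paper: derive a pointwise parabolic differential inequality for $|T|^2$, dominate the reaction terms by a polynomial in $|T|$ using that the background metric (hence $R$ and $\nabla R$) is fixed along the isometric flow, and close with Hamilton's trick plus ODE comparison. The one small streamlining in your route is that you read off the inequality directly from Proposition~\ref{prop:evolnormT}, where all the $\nabla T * T$ cross terms have already cancelled, so you may simply discard $-4|\nabla T|^2$; the paper instead starts from the cruder schematic form $\partial_t|T|^2 = \Delta|T|^2 - 2|\nabla T|^2 + \langle \nabla T*T*\Phi,T\rangle + \cdots$ and must first absorb the $|\nabla T||T|^2$ cross term into $-2|\nabla T|^2$ by Young's inequality (and reduces WLOG to $|T|>1$ to collapse the lower-order powers) before invoking the maximum principle.
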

\begin{proof}
We follow the arguments of \cite[Proposition 3.2]{DGK} and adapt them to the group $\spi$.

Wlog, we can assume that $|T| > 1$. Then
\begin{align*}
\frac{\partial}{\partial t} |T|^2 &= 2 \langle  \Delta T , T \rangle + 2 \langle\nabla T * T * \Phi , T \rangle + 2 \langle T*T*T , T \rangle + 2 \langle T*\nabla R * R * \Phi , T \rangle \\
&= \Delta |T|^2 - 2 |\nabla T|^2 + 2 \langle\nabla T * T * \Phi , T \rangle + 2 \langle T*T*T , T \rangle + 2 \langle T*\nabla R * R * \Phi , T \rangle \\
&\leq  \Delta |T|^2 - 2 |\nabla T|^2 + C |\nabla T| |T|^2  + C|T|^4 + C|T|^2 + C|T|
\end{align*}
because of the bounded geometry.

Use Young Inequality $ab \leq \tfrac{1}{2\epsilon} a^2 + \tfrac{\epsilon}{2} b^2$ to get rid of the term $|\nabla T| |T|^2$:
\begin{align*}
\frac{\partial}{\partial t} |T|^2 
&\leq  \Delta |T|^2 + (- 2 + \tfrac{C}{2\epsilon})  |\nabla T|^2 + C (1 + \tfrac{\epsilon}{2})|T|^4 + C|T|^2 + C|T| ,
\end{align*}
with $\epsilon$ large enough to ensure that $(- 2 + \tfrac{C}{2\epsilon}) < 0$.

Then, using $|T|>1$, we obtain a formula similar to \cite[(3.10)]{DGK}
\begin{align} \label{bochner}
\frac{\partial}{\partial t} |T|^2 
&\leq  \Delta |T|^2 + (- 2 + \tfrac{C}{2\epsilon})  |\nabla T|^2 + C (1 + \tfrac{\epsilon}{2})|T|^4 + C|T|^2,
\end{align}
and argue as in \cite[page 22]{DGK} with the maximal principle to get the DTE.
\end{proof}

Shi-type estimates are crucial at several steps of our various arguments in the next section. They essentially control higher-derivatives from a bounds on the (norm of the) torsion and the geometry of the manifold.
A much more general version of these Shi-type estimates can be found in \cite{DLSE}. 

\begin{lemma}[Shi-type estimates] \cite[corollary 4.10]{DLSE}
There exist constants $C_m$ such that if
$$
|T| \leq K \text{ and } |\nabla^j R|\leq B_j K^{2+j}
$$ on $M \times [0,1/K^2]$ then
$$
|\nabla^m T|\leq C_m t^{-\tfrac{m}{2}}K .
$$
\end{lemma}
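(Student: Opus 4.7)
The plan is to prove the estimate by induction on $m$, using the parabolic maximum principle applied to auxiliary functions that combine several orders of covariant derivatives of $T$ weighted by appropriate powers of $t$. This is the classical Shi--Bando--Hamilton strategy adapted to the harmonic $\spi$ flow; the only new ingredient is that each derivative of $\Phi$ is itself expressible through $T$ via the defining identity \eqref{Tdefneqn}, so that $|\nabla^k \Phi|$ is always controlled by lower-order derivatives of $T$.

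As preparation, I would commute $\nabla^m$ with the evolution equation of Proposition \ref{prop: evolution of T}. Using the Ricci identity and \eqref{Tdefneqn} to rewrite $\nabla \Phi$ in terms of $T$, this produces a schematic equation of the form
$$ (\partial_t - \Delta) \nabla^m T = \sum_{i+j+k = m} \nabla^i T * \nabla^j T * \nabla^k T * \Phi + \sum_{i+j=m}\nabla^i T * \nabla^j R + \nabla^{m+1} R * \Phi . $$
Pairing with $\nabla^m T$ and applying Kato's inequality yields
$$ (\partial_t - \Delta)|\nabla^m T|^2 \leq -2|\nabla^{m+1} T|^2 + C \!\!\sum_{i+j+k=m}\!\!|\nabla^i T||\nabla^j T||\nabla^k T|\,K + C\,K^{m+3}, $$
where the hypothesis $|\nabla^j R|\leq B_j K^{2+j}$ and $|T|\leq K$ absorb all curvature-type contributions into the final term.

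For the base case $m=1$, I would consider the auxiliary function $F_1 = t|\nabla T|^2 + A_1 |T|^2$. Using Proposition \ref{prop:evolnormT}, the negative term $-4A_1 |\nabla T|^2$ coming from the Bochner-type evolution of $|T|^2$ dominates the $|\nabla T|^2$ piece appearing in $t(\partial_t-\Delta)|\nabla T|^2$ provided $A_1$ is chosen sufficiently large depending only on the constants $C$, on the interval $[0,1/K^2]$ where $tK^2 \leq 1$. This yields $(\partial_t - \Delta)F_1 \leq C K^4$, and the maximum principle then gives $F_1(t) \leq A_1 K^2 + C K^4 \cdot t \leq C_1 K^2$, whence $|\nabla T|^2 \leq C_1 K^2 / t$.

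For the inductive step, assuming the bounds up to order $m-1$, I would define
$$ F_m = t^m |\nabla^m T|^2 + A_m\, t^{m-1}|\nabla^{m-1} T|^2 . $$
The cubic interactions in the evolution of $|\nabla^m T|^2$ with only factors $\nabla^{<m} T$ are estimated by the inductive hypothesis $|\nabla^j T|\leq C_j K / t^{j/2}$; after multiplication by $t^m$ the worst total weight remains bounded because $i+j+k=m$. The dangerous terms coupling $|\nabla^m T|^2$ with powers of $K$ are absorbed by the strictly negative contribution $-2A_m|\nabla^m T|^2$ arising from the inductive function $t^{m-1}|\nabla^{m-1}T|^2$ in $F_m$, provided $A_m$ is chosen large enough. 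The parabolic maximum principle on $M\times[0,1/K^2]$ with initial value $F_m(0)=0$ then gives $F_m\leq C_m K^2$, which yields the claimed bound $|\nabla^m T|\leq C_m t^{-m/2} K$.

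The main obstacle is the bookkeeping in the inductive step: one has to verify that the weighting by $t^m$ of every cubic term $\nabla^i T * \nabla^j T * \nabla^k T$ coming from commutators and from the nonlinearity of \eqref{eq: Har Spin(7) Flow} is compatible with the inductive bounds, in particular for the balanced split, and that the constants $A_m$ can be chosen successively so that each absorption is valid uniformly on $[0,1/K^2]$. The $\spi$-specific complications — derivatives of $\Phi$ and the cubic/quartic torsion terms in Proposition \ref{prop: evolution of T} — are dealt with uniformly by \eqref{Tdefneqn}, reducing every occurrence of $\nabla^k\Phi$ to a contraction of at most $k$ factors of $T$ bounded by $K$.
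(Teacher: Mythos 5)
Your proposal is organized along the correct high-level strategy (induction, auxiliary function $t^m|\nabla^m T|^2 + A_m t^{m-1}|\nabla^{m-1}T|^2$, parabolic maximum principle), but it has a genuine gap at the base case that is precisely the crux of the paper's proof.

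Your schematic evolution equation
$$ (\partial_t - \Delta) \nabla^m T = \sum_{i+j+k = m} \nabla^i T * \nabla^j T * \nabla^k T * \Phi + \cdots $$
has the wrong homogeneity: the nonlinearity in Proposition~\ref{prop: evolution of T} contains a term schematically of the form $\nabla T * T * \Phi$, so after applying $\nabla^m$ and pairing with $\nabla^m T$ the correct index condition is $i+j+k = m+1$ (a total of $m+1$ derivatives distributed among the torsion factors). With your stated condition $i+j+k=m$ you have silently dropped the contribution $\nabla T * \nabla T * \Phi$ that occurs at $m=1$. Pairing this against $\nabla T$ produces a term of order $|\nabla T|^3$, and this is exactly the obstruction the paper flags: ``the problem lies with the $|\nabla T|^3$ term.''

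Because of this omission, your base-case argument doesn't close. You propose to absorb ``the $|\nabla T|^2$ piece'' by the good term $-4A_1|\nabla T|^2$ coming from $A_1(\partial_t-\Delta)|T|^2$. That is fine for the genuinely quadratic-in-$\nabla T$ contributions, but $Ct|\nabla T|^3$ is cubic: there is no $A_1$, chosen depending only on fixed constants, that makes $-A_1|\nabla T|^2 + Ct|\nabla T|^3 \leq 0$ uniformly on $M\times[0,1/K^2]$, since $t|\nabla T|$ is a priori unbounded. The paper resolves this by showing that the eight terms making up $|\nabla T|^3$ can, after integrating by parts in the indices and invoking the $\spi$ Bianchi-type identity \eqref{spin7bianchi} together with the fact that $T_{m;is}\in\Omega^2_7$ (so $\Phi_{pcis}T_{m;is}=-6T_{m;pc}$), be rewritten in the form $\nabla T*\nabla T*(T*T + R*\Phi + T*\nabla\Phi + \cdots)$. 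Only after this identity-driven cancellation does one obtain the honest estimate $|\nabla T|^3 \leq CK^2|\nabla T|^2$, which is then absorbable. This use of the $\spi$-specific structure is the genuinely new ingredient that your proposal attributes (incorrectly) merely to the bound $|\nabla^k\Phi| \leq \text{(polynomial in derivatives of $T$)}$; controlling $\nabla\Phi$ is routine, while taming $|\nabla T|^3$ requires the Bianchi identity and the $\Omega^2_7$ eigenvalue relation.

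For $m\geq 2$ your strategy is closer to sound, because the analogous worst term becomes $|\nabla T||\nabla^m T|^2 \leq CKt^{-1/2}|\nabla^m T|^2$ by the already-established $m=1$ estimate, matching the paper's final display. But the induction cannot start without the $m=1$ case, so the gap above is fatal to the argument as written.
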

\begin{remark} Note that this version has a conclusion valid over an interval slightly larger than in \cite{DLSE}, up to $1/K^2$ instead of $1/K^4$, but this has no bearing on the issue.
\end{remark}
\begin{proof}
We closely follow the proof by induction in \cite{DGK}, mutatis mutandis, and only indicate the key steps and differences. We use the symbol $*$ to denote various tensor contractions, the precise form of which is unimportant.

The base case of the induction:

We start with the evolution equation for $\nabla T$:
\begin{align*}
&\frac{\partial}{\partial t} \nabla T = \Delta \nabla  T + \nabla (\nabla T * T * \Phi) + \nabla T*T*T* \Phi +T*T*T* \nabla \Phi + \nabla T*R  + T*\nabla R \\
&+ \nabla^2 R * \Phi + \nabla R * \nabla\Phi + \nabla R *\nabla \Phi * \Phi + R *\nabla^2 \Phi * \Phi  + R *\nabla \Phi * \nabla \Phi+ \nabla T*R*\Phi \\
&+  T*\nabla R*\Phi +  T* R* \nabla\Phi ,
\end{align*}
therefore
\begin{align*}
&\frac{\partial}{\partial t} |\nabla T|^2 = \Delta |\nabla T|^2 - 2|\nabla^2 T|^2 \\
&+ 2 \langle \nabla T, \nabla (\nabla T * T * \Phi) + \nabla T*T*T* \Phi +T*T*T* \nabla \Phi + \nabla T*R  + T*\nabla R + \nabla^2 R * \Phi \\
&+ \nabla R * \nabla\Phi + \nabla R *\nabla \Phi * \Phi + R *\nabla^2 \Phi * \Phi  + R *\nabla \Phi * \nabla \Phi+ \nabla T*R*\Phi+  T*\nabla R*\Phi \\
&+  T* R* \nabla\Phi \rangle ,
\end{align*}
therefore
\begin{align*}
&\frac{\partial}{\partial t} |\nabla T|^2 \leq \Delta |\nabla T|^2 - 2|\nabla^2 T|^2 + 2 \langle \nabla T, \nabla (\nabla T * T * \Phi)\rangle \\
& + C |\nabla T|^2|T|^2 + C |\nabla T||T|^4 + C |\nabla T|^2|R| + C |\nabla T||T||\nabla R| + C |\nabla T||\nabla^2 R| + C |\nabla T||T|^2 |R| .
\end{align*}
Since, by assumption, $|R| \leq B_0 K^2$, $|\nabla R|\leq B_1 K^3$, $|\nabla^2 R|\leq B_2 K^4$ and $|T| \leq K$ we have 
\begin{align*}
&\frac{\partial}{\partial t} |\nabla T|^2 \leq \Delta |\nabla T|^2 - 2|\nabla^2 T|^2 + 2 \langle \nabla T, \nabla (\nabla T * T * \Phi)\rangle  + C K^2|\nabla T|^2 + C K^4|\nabla T| .
\end{align*}
As
\begin{align*}
&\langle \nabla T, \nabla (\nabla T * T * \Phi)\rangle \leq CK |\nabla T||\nabla^2 T| + C |\nabla T|^3 + C K^2 |\nabla T|^2 ,
\end{align*}
with Young Inequality we have
$$
2 CK |\nabla T||\nabla^2 T| \leq \tfrac{CK^2}{\epsilon}|\nabla T|^2 + C\epsilon|\nabla^2 T|^2 ,
$$
and 
\begin{align}\label{ineq}
&\frac{\partial}{\partial t} |\nabla T|^2 \leq \Delta |\nabla T|^2 - (2 - C\epsilon) |\nabla^2 T|^2   + C K^2|\nabla T|^2 + C K^4|\nabla T| + C |\nabla T|^3 .
\end{align}
The problem lies with the $|\nabla T|^3$ term.

In local coordinates the expression of $4(\nabla T*T*\Phi)$ is
\begin{align*}
&4 (\nabla T * T * \Phi)_{m;is} = \nabla_a T_{m;bc} \Big(4T_{a;bp}\Phi_{pcis}+T_{a;ip}\Phi_{bcps}+T_{a;sp}\Phi_{bcip} \Big) + T_{m;bc} \nabla_a T_{a;bp} \Phi_{pcis} \\
&+ 3\nabla_a T_{a;ip} T_{m;ps} + \nabla_a T_{a;sp} T_{m;pi} - 2 T_{a;ip}\nabla_a T_{m;sp} + 2 \nabla_a T_{m;ip}T_{a;sp}  ,
\end{align*}
so the terms making up $|\nabla T|^3$ are
\begin{align*}
&i) 4 \nabla_a T_{m;bc}\nabla_k T_{a;bp}\Phi_{pcis} \nabla_k T_{m;is} ;
\\
&ii) \nabla_a T_{m;bc}\nabla_k T_{a;ip}\Phi_{bcps} \nabla_k T_{m;is} ;
\\
&iii) \nabla_a T_{m;bc}\nabla_k T_{a;sp}\Phi_{bcip} \nabla_k T_{m;is} ;
\\
&iv) \nabla_k T_{m;bc} \nabla_a T_{a;bp} \Phi_{pcis} \nabla_k T_{m;is} ;
\\
&v) 3\nabla_a T_{a;ip} \nabla_k T_{m;ps} \nabla_k T_{m;is} ;
\\
&vi) \nabla_a T_{a;sp} \nabla_k T_{m;pi} \nabla_k T_{m;is} ;
\\
&vii) - 2 \nabla_k T_{a;ip}\nabla_a T_{m;sp} \nabla_k T_{m;is} ;
\\
&viii) 2 \nabla_a T_{m;ip} \nabla_k T_{a;sp} \nabla_k T_{m;is} .
\end{align*}
Using skew-symmetry and the Bianchi-type identity of $\Phi$, the terms ii) and vii) can be re-written:

\begin{align*}
   ii) \nabla_a T_{m;bc}\nabla_k T_{a;ip}\Phi_{bcps} \nabla_k T_{m;is} &=
    \tfrac12 \nabla_k T_{a;ip}\nabla_k T_{m;is}\Phi_{bcps} (\nabla_a T_{m;bc} - \nabla_m T_{a;bc})\\
    &= \nabla T * \nabla T * T*T*\Phi + \nabla T * \nabla T * R*\Phi*\Phi ;\\
    vii) - 2 \nabla_k T_{a;ip}\nabla_a T_{m;sp} \nabla_k T_{m;is} &= 
    - \nabla_k T_{a;ip}\nabla_k T_{m;is} (\nabla_a T_{m;sp} - \nabla_m T_{a;sp})\\
    &= \nabla T * \nabla T * T*T + \nabla T * \nabla T * R*\Phi .
\end{align*}

Exchanging $i$ and $s$ we have that the term iii) equals ii) and the term viii) equals the term vii), while vi) is the opposite of v). 

Since $T_{m;is}$ is in $\Omega^2_7$, the term iv) can be re-written

\begin{align*}
    iv) \nabla_k T_{m;bc} \nabla_a T_{a;bp} \Phi_{pcis} \nabla_k T_{m;is} &= 
    \nabla_k T_{m;bc} \nabla_a T_{a;bp} ( \nabla_k (\Phi_{pcis} T_{m;is}) - \nabla_k\Phi_{pcis}  T_{m;is}) \\
   &=  \nabla_k T_{m;bc} \nabla_a T_{a;bp} ( -6 \nabla_k (T_{m;pc}) - \nabla_k\Phi_{pcis}  T_{m;is}) \\
    &= - \nabla_k T_{m;bc} \nabla_a T_{a;bp} \nabla_k\Phi_{pcis}  T_{m;is}\\
    &= \nabla T * \nabla T * T * \nabla \Phi .
\end{align*}

We do a similar thing to the first term (forgetting the factor 4):
\begin{align*}
    i) \nabla_a T_{m;bc}\nabla_k T_{a;bp}\Phi_{pcis} \nabla_k T_{m;is} &=
    \nabla_a T_{m;bc}\nabla_k T_{a;bp} (\nabla_k (\Phi_{pcis} T_{m;is}) - \nabla_k \Phi_{pcis} T_{m;is})\\
    &= -6 \nabla_a T_{m;bc}\nabla_k T_{a;bp}\nabla_k T_{m;pc} + \nabla T * \nabla T * T *T ,
    \end{align*}
and exchanging a and m and then b and c, we have
\begin{align*}
2\nabla_a T_{m;bc}\nabla_k T_{a;bp}\nabla_k T_{m;pc} &= 
\nabla_k T_{a;bp}\nabla_k T_{m;pc} (\nabla_a T_{m;bc} - \nabla_m T_{a;bc}) ,
\end{align*}
so we can use the Bianchi-type equality again.

In conclusion, the terms leading to the problematic term $|\nabla T|^3$ can be re-written in terms of the type:
$$ \nabla T * \nabla T * T * T + \nabla T * \nabla T * R * \Phi + \nabla T * \nabla T * T * T* \Phi + \nabla T * \nabla T * R * \Phi * \Phi + \nabla T * \nabla T * T * \nabla \Phi ,$$
and 
$$|\nabla T|^3 \leq C K^2 |\nabla T|^2 ,$$
so, for a suitable $\epsilon$, Inequality~\eqref{ineq} becomes
\begin{align*}
&\frac{\partial}{\partial t} |\nabla T|^2 \leq \Delta |\nabla T|^2  + C K^2|\nabla T|^2 + C K^4|\nabla T| ,
\end{align*}
which is exactly equation (3.22) in \cite{DGK}.

For the function $f = t |\nabla T|^2 + \beta |T|^2$, combining results for $\frac{\partial}{\partial t} |\nabla T|^2$ and $\frac{\partial}{\partial t} |T|^2$, keeping in mind that $t\leq 1/K^2$ and choosing $\beta$ large enough, this implies that
\begin{align*}
    \frac{\partial}{\partial t} f &= |\nabla T|^2 + t \frac{\partial}{\partial t} |\nabla T|^2 + \beta \frac{\partial}{\partial t} |T|^2 \\
    & \leq \Delta f + C\beta K^4 .
\end{align*}
As $f(x,0) = \beta |T|^2 \leq \beta K^2$
then $\sup_{M} f(x,t) \leq CK^2 + C\beta t K^4  \leq CK^2$ hence $t|\nabla T|^2 \leq CK^2$.

The m-step of the induction:

Assume that $|\nabla^j T| \leq C_j K t^{-\tfrac j2}$ for $j=1,\dots, m-1$.

The evolution equation for $|\nabla^m T|^2$ is

\begin{align*}
&\frac{\partial}{\partial t} \nabla^m T = \Delta \nabla^m  T + \sum_{i=0}^m \nabla^{m-i} T * \nabla^i R +\nabla^m(\nabla T * T * \Phi) + \sum_{a+b+c+d =m} \nabla^a T*\nabla^b T* \nabla^c T* \nabla^d \Phi \\
&+ \sum_{i=0}^m \nabla^i T* \nabla^{m-i} R  + \sum_{i=0}^m  \nabla^{m+1-i} R *\nabla^i \Phi + \sum_{a+b+c=m} \nabla^a R * \nabla^{b+1}\Phi* \nabla^{c}\Phi \\
&+ \sum_{a+b+c=m} \nabla^a T * \nabla^{b}R * \nabla^{c}\Phi ,
\end{align*}
therefore
\begin{align*}
&\frac{\partial}{\partial t} |\nabla^m T|^2 = \Delta |\nabla^m T|^2 - 2|\nabla^{m+1} T|^2 
+ \sum \nabla^{m} T *\nabla^{m-i} T * \nabla^i R + \nabla^{m} T*\nabla^m(\nabla T * T * \Phi) \\
&+\sum \nabla^{m} T *\nabla^a T*\nabla^b T* \nabla^c T* \nabla^d \Phi 
+ \sum \nabla^{m} T *\nabla^i T* \nabla^{m-i} R  + \sum \nabla^{m} T *\nabla^{m+1-i} R *\nabla^i \Phi \\
& + \sum \nabla^{m} T *\nabla^a R * \nabla^{b+1}\Phi* \nabla^{c}\Phi + \sum \nabla^{m} T * \nabla^a T * \nabla^{b}R * \nabla^{c}\Phi .
\end{align*}
Third term: separating the $i=0$ from the others, we get
\begin{align*}
   | \sum \nabla^{m} T *\nabla^{m-i} T * \nabla^i R | \leq C K^2 |\nabla^{m} T|^2 + 
   C K^3 t^{-\tfrac m2}|\nabla^{m} T| .
\end{align*}
By induction we show that $|\nabla^i \Phi |\leq C \sum_{j=1}^i K^j t^{-\tfrac{j-i}2}$.\\ 
Fifth term: Separating the cases where $a$ or $b$ equals $m$ and using $K^2 T\leq 1$ we have
\begin{align*}
    |\sum \nabla^{m} T *\nabla^a T*\nabla^b T* \nabla^c T* \nabla^d \Phi|\leq C K^2 |\nabla^{m} T|^2 + C K^3 |\nabla^{m} T|t^{-\tfrac m2} .
\end{align*}
Seventh term: Using $K^2 T\leq 1$ we have
\begin{align*}
    |\sum \nabla^{m} T *\nabla^{m+1-i} R *\nabla^i \Phi|\leq C K^3 |\nabla^{m} T|t^{-\tfrac m2} .
\end{align*}
Sixth term: Separate the case $i=m$ from the others and use $K^2 T\leq 1$ to obtain
\begin{align*}
    |\sum \nabla^{m} T *\nabla^i T* \nabla^{m-i} R|\leq C K^2 |\nabla^{m} T|^2 + C K^3 |\nabla^{m} T|t^{-\tfrac m2} .
\end{align*}
Eighth term: Separate the $b=m$ term from the others, use $\nabla^{i+1}\Phi = \nabla^i T + \text{lot}$ and $K^2 T\leq 1$ to obtain
\begin{align*}
    |\sum \nabla^{m} T *\nabla^a R * \nabla^{b+1}\Phi* \nabla^{c}\Phi|\leq C K^2 |\nabla^{m} T|^2 + C K^2 |\nabla^{m} T|t^{-\tfrac{m+1}2} .
\end{align*}
Ninth term: Separate the $a=m$ term from the others and use $K^2 T\leq 1$ to obtain
\begin{align*}
    |\sum \nabla^{m} T * \nabla^a T * \nabla^{b}R * \nabla^{c}\Phi|\leq C K^2 |\nabla^{m} T|^2 + C K^3 |\nabla^{m} T|t^{-\tfrac m2} .
\end{align*}
Fourth term: Using $\nabla(T * \Phi) = \nabla T * \Phi + T*T * \Phi$,
\begin{align*}
&|\nabla^{m} T*\nabla^m(\nabla T * T * \Phi)| \leq |\nabla^{m} T*\nabla^{m+1}T * T * \Phi)|
  +  |\nabla^{m} T*\nabla^{m}T * \nabla(T * \Phi)| \\
  &+ |\nabla^{m} T*\sum_{i=2}^{m-1}\nabla^{m+1-i}T * \nabla^i (T * \Phi)| +
  |\nabla^{m} T*\nabla T * \nabla^m(T * \Phi)| \\
&  \leq CK |\nabla^{m} T||\nabla^{m+1} T| + C |\nabla^{m} T|^2(K t^{-\tfrac 12}+ K^2) 
  + C |\nabla^{m} T| \sum_{i=2}^{m-1}Kt^{-\tfrac {m+1-i}2}\sum_{j=0}^i K t^{-\tfrac {i-j}2}\sum_{k=1}^{j} K^k t^{\tfrac {k-j}2}\\
&  + C |\nabla^{m} T| K t^{-\tfrac 12} (|\nabla^{m} T| + \sum_{j=1}^m K t^{-\tfrac {m-i}2}\sum_{k=1}^{i} K^k t^{\tfrac {k-i}2}\\
 & \leq CK|\nabla^{m} T||\nabla^{m+1} T| + C|\nabla^{m} T|^2 (K t^{-\tfrac 12}+ K^2) + 
  C K^2 |\nabla^{m} T| t^{-\tfrac{m+1}2} .
\end{align*}
In conclusion
\begin{align*}
    &\frac{\partial}{\partial t} |\nabla^m T|^2 \leq \Delta |\nabla^m T|^2 - 2|\nabla^{m+1} T|^2 
    + C K^2 |\nabla^{m} T|^2 + C K^3 t^{-\tfrac{m}2}|\nabla^{m} T| + C K^2 t^{-\tfrac{m+1}2}|\nabla^{m} T| \\
    &+ CK|\nabla^{m} T||\nabla^{m+1} T|+  C K t^{-\tfrac{1}2}|\nabla^{m} T|^2 ,
\end{align*}
which is exactly Equation (3.32) in \cite{DGK}.

As the rest of the proof completely relies on this equation, it can be read in \cite{DGK}.
\end{proof}

\section{Analysis of the flow II}

Let $(M, g)$ be a complete Riemannian manifold. For $x_0\in M$, let $u$ be the fundamental solution of the backward heat equation, starting with the delta function at $x_0$ \cite{Hamilton1993}:
\begin{align*}
    & \Big( \frac{\partial}{\partial t} + \Delta \Big) u = 0, \quad \lim_{t \to t_0} u = \delta_{x_0}
\end{align*}
and set $u= \frac{e^{-f}}{\big( 4\pi (t_0 -t) \big)^4}.$

For a solution $\{\Phi(t)\}_{t\in [0, t_0)}$ of the harmonic $\mathrm{Spin}(7)$-flow on $(M,g)$, we define the function
\begin{align}\label{thetadefn}
  \Theta_{(x_0, t_0)} (\Phi(t)) = (t_0 -t) \int_M |T_{\Phi(t)}|^2 u \, \vol.  
\end{align}

We start off with a derivation of the function $\Theta\circ\Phi$ and give a little bit more details of the proof than in \cite[Lemma 5.1]{DLSE}.
\begin{lemma} \label{eqlemma4.4}
\begin{align*}
\frac{d}{d t} \Theta &= - 2(t_0 -t) \int_M  |\Div T - f \lrcorner T|^2 u   \\
 & - 2(t_0 -t) \int_M  \big( \nabla_m \nabla_l u - \frac{\nabla_m u \nabla_l u}{u}+ \frac{ug_{ml}}{2(t_0 -t)} \big) T_{m;is}T_{l;is}\\
 & - (t_0 -t) \int_M u R_{mlis} (2T_{l;ir}T_{m;rs} - 2 T_{m;ir}T_{l;rs} + \tfrac{1}{4} R_{mlis} - \tfrac18 R_{mlab} \Phi_{abis} ) \\
 & - 2(t_0 -t) \int_M T_{m;is}  u \nabla_l R_{mlis} .
 \end{align*}
\end{lemma}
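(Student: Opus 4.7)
\begin{sop}
\textbf{Overall plan.} I would differentiate using the product rule—noting that the metric, hence $\vol$, is $t$-independent under the isometric flow—and then substitute $\partial_t u = -\Delta u$ (backward heat equation) together with Proposition \ref{prop:evolnormT} for $\partial_t|T|^2$. Two integrations by parts make $(t_0-t)\int u\,\Delta|T|^2\,\vol$ and $-(t_0-t)\int|T|^2\,\Delta u\,\vol$ cancel, so that only
\begin{align*}
\tfrac{d\Theta}{dt} = -\!\int|T|^2 u\,\vol - 2(t_0-t)\!\int u|\nabla T|^2\,\vol + (t_0-t)\!\int u\,[\text{quartic-}T + R\!\ast\! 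T^2]\,\vol - 2(t_0-t)\!\int u\,T_{m;is}\nabla_l R_{mlis}\,\vol
\end{align*}
survives (the last term already being the fourth line of the claim).

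\textbf{Converting $|\nabla T|^2$ to a squared divergence plus Hessian.} The core step rewrites the $|\nabla T|^2$ integral via the Bianchi-type identity \eqref{spin7bianchi}. Setting
\begin{align*}
F_{lm;is}:=2T_{l;ir}T_{m;rs} - 2T_{m;ir}T_{l;rs} + \tfrac14 R_{mlis} - \tfrac18 R_{mlab}\Phi_{abis},
\end{align*}
one has $\nabla_l T_{m;is} = \nabla_m T_{l;is} + F_{lm;is}$. Substituting in one factor,
\begin{align*}
\int u|\nabla T|^2 = \int u(\nabla_m T_{l;is})(\nabla_l T_{m;is}) + \int u(\nabla_l T_{m;is})F_{lm;is}.
\end{align*}
On the first integral I would integrate by parts in $m$, commute derivatives via the Ricci identity to write $\nabla_m\nabla_l T_{m;is} = \nabla_l(\Div T)_{is} + [\nabla_m,\nabla_l]T_{m;is}$, and then integrate by parts once more in $l$. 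This yields
\begin{align*}
\int u|\Div T|^2 + 2\!\int (\Div T)_{is}T_{l;is}\nabla_l u + \!\int T_{m;is}T_{l;is}\nabla_m\nabla_l u + [R\!\ast\! T^2\text{ from the commutator}].
\end{align*}

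\textbf{Completing the square.} Since $u = e^{-f}/(4\pi(t_0-t))^4$ gives $\nabla u = -u\nabla f$, so $(f\lrcorner T)_{is}:=(\nabla_l f)T_{l;is} = -u^{-1}(\nabla_l u)T_{l;is}$, we recognize
\begin{align*}
\!\int u|\Div T - f\lrcorner T|^2 = \!\int u|\Div T|^2 + 2\!\int (\Div T)_{is}T_{l;is}\nabla_l u + \!\int\tfrac{\nabla_m u\,\nabla_l u}{u}T_{m;is}T_{l;is}.
\end{align*}
The product-rule term $-\int|T|^2 u$ equals $-2(t_0-t)\int\tfrac{u g_{ml}}{2(t_0-t)}T_{m;is}T_{l;is}$, so together these reproduce the first two lines of the claim.

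\textbf{Assembling the curvature line (hardest step).} The main obstacle is the algebraic bookkeeping needed to collapse the remaining quartic-$T$ and $R\!\ast\! T^2$ terms into the single contraction $-(t_0-t)\int u\,R_{mlis}F_{lm;is}$, which is exactly the third line. These terms come from three independent sources: the last four terms in Proposition \ref{prop:evolnormT}; the Ricci-identity commutator invoked above; and the second integral $\int u (\nabla_l T_{m;is}) F_{lm;is}$, which I would integrate by parts in $l$, exploiting the antisymmetry $F_{lm;is}=-F_{ml;is}$. The identification relies on the skew-symmetries and first Bianchi identity of $R$, the contraction identities \eqref{eq:impiden1}--\eqref{eq:impiden4}, and the relation $\beta_{ab}\Phi_{abij}=-6\beta_{ij}$ for $\beta\in\Omega^2_7$ applied to the last two indices of $T$.
\end{sop}
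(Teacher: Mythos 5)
The overall structure is sound, but you choose a genuinely different (and longer) starting point than the paper, which changes the character of the rest of the argument.

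The paper does \emph{not} substitute Proposition~\ref{prop:evolnormT}. Instead it uses the short, unexpanded evolution equation for the torsion itself,
$$
\partial_t T_{m;is} \;=\; \nabla_r T_{r;ip}\,T_{m;ps} \;-\; \nabla_r T_{r;sp}\,T_{m;pi} \;+\; \pi_7\big(\nabla_m (\nabla_r T_{r;is})\big),
$$
and observes that the first two terms contract to zero against $T_{m;is}$ (by symmetry/antisymmetry in the inner indices) and that $T_{m;is}\in\Omega^2_7$ makes the $\pi_7$ projection transparent. Thus $\partial_t|T|^2 = 2\,T_{m;is}\nabla_m\nabla_r T_{r;is}$, a pure gradient of the divergence, with no Laplacian of $T$ and no $|\nabla T|^2$ anywhere. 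One integration by parts immediately produces $-|\Div T|^2$ and the first cross term, and the second Bianchi application plus a further integration by parts produce the Hessian of $u$ and the third line. Your plan, by contrast, starts from $\partial_t|T|^2 = \Delta|T|^2 - 2|\nabla T|^2 + (\text{reaction})$: you must first cancel $\Delta|T|^2$ against $\Delta u$, and then you are left to dismantle $-2(t_0-t)\int u|\nabla T|^2$ by Bianchi, Ricci commutation and two more integrations by parts. This is effectively running the derivation of Propositions~\ref{prop: evolution of T}--\ref{prop:evolnormT} in reverse before being able to integrate by parts, which is why you are forced to confront the long list of quartic-$T$ and $R*T^2$ terms that the paper's route never generates. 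The completing-the-square step and the observation that $\nabla u = -u\,\nabla f$ are the same in both, and the cancellation of $|f\lrcorner T|^2$ against $-\nabla_m u\,\nabla_l u/u$ is correct.

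One shortcut worth noting in your decomposition: since $F_{lm;is}=-F_{ml;is}$, the pairing $\nabla_l T_{m;is}\,F_{lm;is}=\tfrac12 F_{lm;is}F_{lm;is}$ pointwise, with no integration by parts required, which simplifies your ``second integral.'' That said, your ``assembling the curvature line'' step, which you correctly flag as the hard part, is only described and not carried out; the identity you need is roughly $\tfrac12 |F|^2 - \tfrac12 Q - T\cdot(R*T) = \tfrac12\, R_{mlis}F_{lm;is}$ with $Q$ the reaction terms of Proposition~\ref{prop:evolnormT}, and this requires the curvature symmetries and the $\Omega^2_7$ projection identity as you say, but it is the kind of index-level verification that cannot be waved through, and is precisely what the paper's starting point is designed to sidestep. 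So your proposal is a credible alternative sketch, but the paper's choice of the un-Bochnerised $\partial_t T$ buys a much shorter path with no $|\nabla T|^2$ detour.
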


\begin{proof}
By direct computation, we have
\begin{align*}
   & \frac{d}{d t} \Theta = \int_M (t_0 -t)  u \frac{\partial}{\partial t} |T|^2    - |T|^2 u    +    (t_0 -t) |T|^2 \frac{\partial}{\partial t} u \,  v_g\\
    &= \int_M (t_0 -t)  u \frac{\partial}{\partial t} |T|^2    - |T|^2 u - (t_0 -t) |T|^2 \Delta u  \, v_g\\
     &= \int_M 2(t_0 -t)  u T_{m;is}\frac{\partial}{\partial t} T_{m;is}    - |T|^2 u - (t_0 -t) |T|^2 \Delta u  \, v_g\\
     &= \int_M 2(t_0 -t)  u T_{m;is}\Big(\nabla_r T_{r;ip} T_{m;ps} - \nabla_r T_{r;sp} T_{m;pi} + \pi_7 (\nabla_m (\nabla_r T_{r;is})) )\Big)    - |T|^2 u \\
     &- (t_0 -t) |T|^2 \Delta u  \, v_g ,
\end{align*}
but
\begin{enumerate}
    \item $ T_{m;is} \nabla_r T_{r;ip} T_{m;ps} =0$
    because $T_{m;is} T_{m;ps}$ is symmetric in $(i,p)$ and $\nabla_r T_{r;ip}$ is skew-symmetric in $(i,p)$;
    \item $ T_{m;is} \nabla_r T_{r;sp} T_{m;pi} =0$
    because $T_{m;is} T_{m;pi}$ is symmetric in $(s,p)$ and $\nabla_r T_{r;sp}$ is skew-symmetric in $(s,p)$;
    \item $T_{m;is} \in \Lambda^{2}_{7}$,
\end{enumerate}
therefore
$$
\frac{d}{d t} \Theta = \int_M 2(t_0 -t)  u T_{m;is} (\nabla_m (\nabla_r T_{r;is})) - |T|^2 u - (t_0 -t) |T|^2 \Delta u  \, v_g .
$$
Integrating by parts and using the Bianchi identity, we have
\begin{align*}
   & \frac{d}{d t} \Theta =  \int_M - 2(t_0 -t)  u \nabla_m T_{m;is}  \nabla_r T_{r;is} - 2(t_0 -t) \nabla_m u  T_{m;is}  \nabla_r T_{r;is} - |T|^2 u \\
   &+ 2 (t_0 -t) T_{m;is} \nabla_l T_{m;is} \nabla_l u  \, v_g \\
   & = \int_M - 2(t_0 -t) \Big( |\Div T|^2 u + \nabla_m u  T_{m;is}  \nabla_r T_{r;is}\Big) - |T|^2 u + 2 (t_0 -t) T_{m;is} \nabla_l T_{m;is} \nabla_l u  \, v_g \\
    & = \int_M - 2(t_0 -t) \Big( |\Div T|^2 u + \nabla_m u  T_{m;is}  \nabla_r T_{r;is}\Big) - |T|^2 u  \\
    &+ 2 (t_0 -t) T_{m;is} \nabla_l u \Big( \nabla_m T_{l;is}
     + 2 T_{l;ir}T_{m;rs} - 2 T_{m;ir}T_{l;rs} + \tfrac{1}{4} R_{mlis} - \tfrac18 R_{mlab} \Phi_{abis}
    \Big) \, v_g \\
& = \int_M - 2(t_0 -t) \Big( |\Div T|^2 u + \nabla_m u  T_{m;is}  \nabla_r T_{r;is}\Big) - |T|^2 u  + 2 (t_0 -t) \Big(  T_{m;is} \nabla_l u \nabla_m T_{l;is} \\
& \qquad + 2 T_{m;is} \nabla_l u T_{l;ir}T_{m;rs} - 2 T_{m;is} \nabla_l u T_{m;ir}T_{l;rs} + T_{m;is} \nabla_l u\tfrac{1}{4} R_{mlis} - \tfrac18 T_{m;is} \nabla_l u R_{mlab} \Phi_{abis} \Big) \, v_g ,
\end{align*}
but, as previously,
\begin{enumerate}
    \item $T_{m;is} T_{l;ir}T_{m;rs} =0 $;
    \item $T_{m;is} T_{m;ir}T_{l;rs} =0$;
    \item $T_{m;is} (\tfrac{1}{4} R_{mlis} - \tfrac18 R_{mlab} \Phi_{abis}) = T_{m;is} R_{mlis}$,
\end{enumerate}
so
\begin{align*}
   & \frac{d}{d t} \Theta = 
 \int_M - 2(t_0 -t) \Big( |\Div T|^2 u + \nabla_m u  T_{m;is}  \nabla_r T_{r;is}\Big) - |T|^2 u  \\ 
 & \qquad + 2 (t_0 -t) \Big(  T_{m;is} \nabla_l u \nabla_m T_{l;is}
 + T_{m;is} \nabla_l u R_{mlis} \Big) \, v_g  \\
 & = \int_M - 2(t_0 -t) \Big( |\Div T|^2 u + \nabla_m u  T_{m;is}  \nabla_r T_{r;is}\Big) - |T|^2 u  \\
 & - 2(t_0 -t) \int_M \nabla_m T_{m;is} \nabla_l u T_{l;is} + T_{m;is} \nabla_m \nabla_l u T_{l;is}\\
 & - 2(t_0 -t) \int_M \nabla_l T_{m;is}  u R_{mlis} + T_{m;is}  u \nabla_l R_{mlis} \\
 &= \int_M - 2(t_0 -t) \Big( |\Div T|^2 u + 2 \nabla_m u  T_{m;is}  \nabla_r T_{r;is}\Big) - |T|^2 u  \\
 & - 2(t_0 -t) \int_M T_{m;is} \nabla_m \nabla_l u T_{l;is} \\
&  - (t_0 -t) \int_M u R_{mlis} (\nabla_l T_{m;is} - \nabla_m T_{l;is}) \\
 & - 2(t_0 -t) \int_M T_{m;is}  u \nabla_l R_{mlis} 
 \end{align*}
 \begin{align*}
 &= \int_M - 2(t_0 -t) \Big( |\Div T|^2 u + 2 \nabla_m u  T_{m;is}  \nabla_r T_{r;is}\Big) - |T|^2 u  \\
 & - 2(t_0 -t) \int_M T_{m;is} \nabla_m \nabla_l u T_{l;is}\\
 & - (t_0 -t) \int_M u R_{mlis} (2T_{l;ir}T_{m;rs} - 2 T_{m;ir}T_{l;rs} + \tfrac{1}{4} R_{mlis} - \tfrac18 R_{mlab} \Phi_{abis} ) \\
 & - 2(t_0 -t) \int_M T_{m;is}  u \nabla_l R_{mlis} \\
 &= \int_M - 2(t_0 -t) \Big( |\Div T|^2 u - 2 \langle \Div T , \nabla f \lrcorner T \rangle  u \Big)  \\
 & - 2(t_0 -t) \int_M  \big( \nabla_m \nabla_l u + \frac{ug_{ml}}{2(t_0 -t)} \big) T_{m;is}T_{l;is}\\
 & - (t_0 -t) \int_M u R_{mlis} (2T_{l;ir}T_{m;rs} - 2 T_{m;ir}T_{l;rs} + \tfrac{1}{4} R_{mlis} - \tfrac18 R_{mlab} \Phi_{abis} ) \\
 & - 2(t_0 -t) \int_M T_{m;is}  u \nabla_l R_{mlis} \\
 &= \int_M - 2(t_0 -t) \Big( |\Div T|^2 u - 2 \langle \Div T , \nabla f \lrcorner T \rangle  u + |f \lrcorner T|^2 u \Big)  \\
 & - 2(t_0 -t) \int_M  \big( \nabla_m \nabla_l u - \frac{\nabla_m u \nabla_l u}{u}+ \frac{ug_{ml}}{2(t_0 -t)} \big) T_{m;is}T_{l;is}\\
 & - (t_0 -t) \int_M u R_{mlis} (2T_{l;ir}T_{m;rs} - 2 T_{m;ir}T_{l;rs} + \tfrac{1}{4} R_{mlis} - \tfrac18 R_{mlab} \Phi_{abis} ) \\
 & - 2(t_0 -t) \int_M T_{m;is}  u \nabla_l R_{mlis} ,
\end{align*}
and we obtain the formula we wish for.
\end{proof}

\begin{theorem}[almost monotonicity formula] \cite[Theorem 5.2]{DLSE}
\label{thm:almostmon} \\
Let $\{\Phi(t)\}$ be a solution of the harmonic $\mathrm{Spin}(7)$-flow \eqref{eq: Har Spin(7) Flow} on $(M^8,g)$.
\begin{enumerate}
    \item If $M$ is compact, then, for any $0<\tau_1 < \tau_2 < t_0$, there exist $K_1$, $K_2>0$ depending only on the geometry of $(M,g)$ such that
$$
\Theta(\Phi(\tau_2)) \leq K_1 \Theta(\Phi(\tau_1)) + K_2 (\tau_1 -\tau_2) (E(0) +1) .
$$
    \item When $(M,g) = (\bR^8, g_{\mathrm{Eucl}})$, then, for any $x_0 \in \bR^8$ and $0\leq \tau_1 < \tau_2$ we have
$$
\Theta(\Phi(\tau_2)) \leq \Theta(\Phi(\tau_1)) .
$$
\end{enumerate}
\end{theorem}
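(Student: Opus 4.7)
The natural starting point is Lemma~\ref{eqlemma4.4}, which expresses $\frac{d}{dt}\Theta$ as the sum of four pieces: a manifestly non-positive \emph{gradient term} $-2(t_0-t)\int_M|\Div T-\nabla f\lrcorner T|^2\,u\,\vol$; a \emph{Hessian term} weighted by the symmetric tensor $H_{ml}:=\nabla_m\nabla_l u - \frac{\nabla_m u\,\nabla_l u}{u} + \frac{u\,g_{ml}}{2(t_0-t)}$; a \emph{curvature term} quadratic in $R$ and in $T$; and a \emph{first-derivative-of-curvature term} $-2(t_0-t)\int_M T_{m;is}\,u\,\nabla_l R_{mlis}$. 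The plan is to show that on flat $\bR^8$ the last three pieces vanish identically, while on a compact background each is controlled in terms of $u$, $|T|^2$, and constants depending only on the geometry.

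For part (2), I would compute directly with the Euclidean backward heat kernel $u=(4\pi(t_0-t))^{-4}\exp(-|x-x_0|^2/4(t_0-t))$. A short calculation (with $f = |x-x_0|^2/(4(t_0-t)) + 4\log(4\pi(t_0-t))$) gives $H_{ml}\equiv 0$, the classical Huisken identity underlying heat-kernel monotonicity; simultaneously $R\equiv 0$ and $\nabla R\equiv 0$ eliminate the curvature pieces. Only the non-positive gradient term survives, so $\frac{d}{dt}\Theta\leq 0$, and integrating from $\tau_1$ to $\tau_2$ delivers (2).

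For part (1), bounded geometry of the fixed compact background supplies uniform bounds $|R|,|\nabla R|\leq \Lambda$. Cauchy--Schwarz then controls both the curvature piece and the $T\,\nabla R$ piece pointwise by $C(|T|^2+1)$; the precise combinatorial form of the contractions is immaterial, only their schematic shape matters. For the Hessian term the central input is the pointwise bound $|H_{ml}|\leq C\,u$, valid for the backward heat kernel on a compact manifold of bounded geometry; this is a consequence of Hamilton's matrix Harnack inequality (equivalently, a Li--Yau-type Hessian estimate). Inserting these bounds into Lemma~\ref{eqlemma4.4} and discarding the non-positive gradient term yields
\begin{align*}
\tfrac{d}{dt}\Theta \leq C(t_0-t)\int_M\bigl(|T|^2+1\bigr)\,u\,\vol.
\end{align*}

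The closing step is to repackage this as a Gr\"onwall-type inequality and integrate. The identity $(t_0-t)\int_M |T|^2 u\,\vol = \Theta(\Phi(t))$ turns the first piece directly into $C\,\Theta$. For the second piece, uniform boundedness of $\int_M u\,\vol$ together with the energy monotonicity $E(\Phi(t))\leq E(0)$ along \eqref{eq: Har Spin(7) Flow} produces a bound of the form $\frac{d}{dt}\Theta \leq C\,\Theta + C(E(0)+1)$, after which Gr\"onwall between $\tau_1$ and $\tau_2$ yields the stated estimate, with $K_1=e^{C(\tau_2-\tau_1)}$ and $K_2$ proportional to $K_1$. The step I expect to be the main obstacle is the Hessian estimate $|H_{ml}|\leq Cu$: it is the sole place where the global geometry of $(M,g)$ truly enters, and the cleanest route is to quote Hamilton's matrix Harnack inequality rather than reprove it from scratch.
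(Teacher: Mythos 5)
Your plan matches the paper's up to one step, and it is exactly the step you flag as the main obstacle: the asserted pointwise bound $|H_{ml}|\le C\,u$ with a \emph{uniform} constant is not what the backward heat kernel satisfies on a general compact manifold. The estimate from \cite{hamilton-compactness} used in \cite{DGK,DLSE} controls $\nabla_m\nabla_l f-\tfrac{g_{ml}}{2(t_0-t)}$ only up to a factor that diverges like $\log\tfrac{B}{(t_0-t)^4}$ as $t\to t_0$, and it also involves $f$ itself, so that after pairing against $|T|^2u$ one must additionally invoke the monotonicity $E(\Phi(t))\le E(\Phi(0))$ of the energy along the flow. Thus the Hessian piece is bounded by
$$
C\Big(E(\Phi(0))+\log\tfrac{B}{(t_0-t)^4}\,\Theta(\Phi(t))\Big),
$$
and the resulting differential inequality reads
$$
\frac{d}{dt}\Theta\ \le\ C_1\Big(1+\log\tfrac{B}{(t_0-t)^4}\Big)\Theta+C_2\big(1+E(\Phi(0))\big),
$$
with a coefficient of $\Theta$ that is unbounded near $t_0$. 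A naive Gr\"onwall argument yielding $K_1=e^{C(\tau_2-\tau_1)}$ therefore does not apply; one instead exploits that the log singularity is \emph{integrable}: choosing $\xi$ with $\xi'(t)=1+\log\tfrac{B}{(t_0-t)^4}$ and integrating $\frac{d}{dt}\big[e^{-C_1\xi(t)}\Theta(\Phi(t))\big]\le K\big(E(\Phi(0))+1\big)$ over $[\tau_1,\tau_2]$ gives the claimed $K_1,K_2$. This is precisely why the formula is only ``almost'' monotone. Your treatment of part (2) and of the curvature and $\nabla R$ pieces (bounded by $C(1+\Theta)$ via bounded geometry, Young's inequality, and $\int_M u\,\vol=1$) agrees with the paper's.
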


\begin{proof}
We sketch the proof following \cite[Theorem 5.3]{DGK}.
\begin{enumerate}
\item The following equation is a direct adaptation of \cite[Lemma 5.2]{DGK}, using the $\mathrm{Spin}(7)$-Bianchi identity \eqref{spin7bianchi}:
\begin{align} \label{lem: d/dt of Theta}
    \frac{d}{d t} \Theta_{(x_0, t_0)}(\Phi(t)) 
    =& - 2(t_0 -t) \int_M  |\Div T - \del f \lrcorner T|^2 u \vol \notag\\
    & - 2(t_0 -t) \int_M  \big( \nabla_m \nabla_l u - \frac{\nabla_m u \nabla_l u}{u}+ \frac{ug_{ml}}{2(t_0 -t)} \big) T_{m;is}T_{l;is} \vol  \notag\\
    & - (t_0 -t) \int_M u R_{mlis} (2T_{l;ir}T_{m;rs} - 2 T_{m;ir}T_{l;rs} + \tfrac{1}{4} R_{mlis} - \tfrac18 R_{mlab} \Phi_{abis} ) \vol \notag\\
    & - 2(t_0 -t) \int_M T_{m;is}  u \nabla_l R_{mlis} \vol. 
\end{align}
    \item  The third and fourth terms of  Lemma~\eqref{eqlemma4.4} are bounded by
$$
C( 1 + \Theta(\Phi(t))),
$$
due to the bounded geometry of $(M,g)$, Young's inequality and $\int_M  u \, \vol =1$.

For the second term of Lemma~\eqref{eqlemma4.4}, use \cite{hamilton-compactness} and the decreasing of 
$E(\Phi(t))$ along the harmonic $\mathrm{Spin}(7)$-flow to bound it by
$$
C \big(E(\Phi(0)) + \log \frac{B}{(t_0 -t)^4} \Theta(\Phi(t))\big),
$$
so that
\begin{align*}
\frac{d}{dt} \Theta(\Phi(t))  \leq& - 2(t_0 -t) \int_M  |\Div T - \del f \lrcorner T|^2 u \\
& +C_1 \Big(1 + \log \big( \frac{B}{(t_0 -t)^4}\big) \Big)\Theta(\Phi(t)) 
+ C_2 ( 1 + E(\Phi(0))) \vol.
\end{align*}
To control the logarithmic term, let $\xi(t)$ be any function satisfying
$$
\xi'(t) = 1 + \log \frac{B}{(t_0 - t)^4}.
$$
The claim is then obtained by integration over  $[t_0 -1 , t_0[\,$ of
$$
\frac{d}{dt} \Big[ e^{-C_1 \xi(t)} \Theta(\Phi(t))\Big] \leq K \big(E(\Phi(0)) + 1 \big).
$$
\item On $(M^8,g) = (\bR^8, g_{\mathrm{Eucl}})$, the backward heat kernel is
$$
u(x,t) = \frac{1}{(4\pi(t_0-t))^4}
    \exp\left\{-\frac{|x-x_0|^2}{4(t_0-t)}\right\}
$$
so indeed
$\frac{d}{dt} \Theta(\Phi(t))  \leq 0.$
\end{enumerate}
\end{proof}

There exists a more direct and cost-effective to obtain a decreasing quantity from $|T|^2$, though its importance remains uncertain.
\begin{lemma}[a simpler monotonicity formula]
Put $\epsilon(t) = |T|^2$ and consider the function 
$$
Z(t) = (\rT-t) \int_M \epsilon k,
\quad
0\leq t< \rT,
$$
where $k$ is any (positive) solution of the backward heat equation $\partial_t k = - \Delta k$ on $M_{\rT}$.
Then 
$$ Z(t) \leq Z(0) e^{Ct}$$
for $0\leq t \leq \delta$ (from DTE).
\end{lemma}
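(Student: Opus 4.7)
The plan is to differentiate $Z$ directly and reduce the resulting expression to a Gronwall-type differential inequality. I would begin with the product rule:
\begin{align*}
\frac{dZ}{dt} = -\int_M \epsilon k \vol + (\rT-t)\int_M \bigl[(\partial_t \epsilon)k + \epsilon(\partial_t k)\bigr] \vol.
\end{align*}
Since $k$ solves the backward heat equation, integration by parts on the compact manifold $M$ converts $\int_M \epsilon\,\partial_t k \vol = -\int_M \epsilon\,\Delta k \vol = -\int_M (\Delta \epsilon) k \vol$, so that
\begin{align*}
\frac{dZ}{dt} = -\int_M \epsilon k \vol + (\rT-t)\int_M (\partial_t \epsilon - \Delta \epsilon)\,k \vol.
\end{align*}

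Next, I would invoke Proposition \ref{prop:evolnormT} to expand $\partial_t \epsilon - \Delta \epsilon$ as $-2|\nabla T|^2$ plus polynomial contributions of type $T * T * T * T$, $T * T * R$, and $T * \nabla R$. The gradient term is non-positive and can be dropped. On $[0,\delta]$, the doubling-time estimate yields $|T| \leq 2\cT(0)$, while bounded geometry controls $|R|$ and $|\nabla R|$. The quartic and quadratic-in-$T$ terms are then immediately bounded by $C\epsilon$. The only term that resists this treatment is the linear-in-$T$ contribution $T_{m;is}\nabla_a R_{mais}$, for which a direct estimate gives only $C\sqrt{\epsilon}$; applying Young's inequality $|T||\nabla R| \leq \tfrac12 (\epsilon + |\nabla R|^2)$ absorbs it at the cost of an additive constant. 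Altogether,
\begin{align*}
\partial_t \epsilon - \Delta \epsilon \leq C\epsilon + C'.
\end{align*}

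Substituting and noting that $\int_M k \vol$ is constant in $t$ (since $\frac{d}{dt}\int_M k \vol = -\int_M \Delta k \vol = 0$), I would obtain
\begin{align*}
\frac{dZ}{dt} \leq -\int_M \epsilon k \vol + C\, Z(t) + C''(\rT-t) \leq C\, Z(t) + C''',
\end{align*}
and Gronwall's inequality delivers $Z(t) \leq Z(0)\,e^{Ct}$, the additive constant being absorbed on the short interval $[0,\delta]$ by enlarging $C$ if necessary.

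The main obstacle is the isolated linear-in-$T$ term $T * \nabla R$ in the evolution of $|T|^2$: all higher-order terms fit inside $C\epsilon$ under the doubling-time estimate, but this one genuinely produces a constant after Young's inequality. It is this feature that prevents a pointwise monotonicity of $Z$ and forces the use of an exponential growth factor.
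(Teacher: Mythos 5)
Your strategy --- differentiate $Z$, move $\Delta$ from $k$ to $\epsilon$ by self-adjointness, invoke the Bochner-type evolution of $|T|^2$, and use the doubling-time estimate to close a Gronwall inequality --- matches the paper's proof, and your intermediate steps (including the observation that $\int_M k\vol$ is $t$-constant) are correct. The gap is in your final claim. After Young's inequality the differential inequality you obtain is $Z'\leq CZ+D$ with a genuine additive constant $D>0$ coming from the $T*\nabla R$ term, and Gronwall then yields $Z(t)\leq Z(0)e^{Ct}+\tfrac{D}{C}\bigl(e^{Ct}-1\bigr)$. That second summand cannot be ``absorbed by enlarging $C$'': to dominate $\tfrac{D}{C}\bigl(e^{Ct}-1\bigr)$ by $Z(0)\bigl(e^{C't}-e^{Ct}\bigr)$ for small $t>0$ one needs $C'\geq C+D/Z(0)$, which blows up as $Z(0)\to 0$, so there is no uniform exponent, and the stated conclusion $Z(t)\leq Z(0)e^{Ct}$ would in fact fail for small $Z(0)$.

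The paper avoids this by citing the inequality \eqref{bochner}, $\partial_t\epsilon-\Delta\epsilon\leq C_1\epsilon+C_2\epsilon^2$, in which the offending linear $C|T|$ term has already been merged into $C|T|^2$ under the wlog assumption $|T|>1$ made inside the doubling-time estimate argument. Your more careful accounting exposes the additive constant that this device suppresses, but you must then do one of two things: either invoke the same wlog (or cite \eqref{bochner} directly, as the paper does), so that the reaction term is bounded by $C\epsilon+C\epsilon^2$ with no constant and DTE turns the $\epsilon^2$ contribution into $C_0\epsilon$; or report the weaker estimate $Z(t)\leq\bigl(Z(0)+C'\bigr)e^{Ct}$, which is what your Gronwall step honestly delivers. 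As written, the absorption step is a genuine error.
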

\begin{proof}
Since 
$$
\partial_t Z 
= - \int_M \epsilon k 
  + (\rT-t) \int_M k \partial_t\epsilon +  \epsilon \partial_t k.
$$
By self-adjointness of the Laplacian and the `reaction-diffusion' Bochner formula of Equation~\eqref{bochner}, the second integral satisfies the following upper bound:
\begin{align*}
    \int_M k \partial_t\epsilon +  \epsilon \partial_t k 
    &= \int_M k \partial_t\epsilon 
      - \epsilon \Delta k 
    = \int_M k \partial_t\epsilon 
       - k \Delta \epsilon  \\
    &= \int_M k (\partial_t\epsilon - \Delta \epsilon)  \\
    &\leq \int_M k (C_1 \epsilon + C_2 \epsilon^2 ), 
\end{align*}
and therefore
$$
\partial_t Z \leq  C_1 Z(t) + (\rT-t) \int_M k\epsilon (C_2 \epsilon),
\quad
0\leq t < \rT.
$$
Then by DTE, we have 
$$
 C_2 \epsilon(x,t) \leq C_2 \cT(t) \leq 2C_2 \cT(0) = C_0
$$
so
\begin{align*}
\partial_t Z &\leq  C_1 Z(t) + (\rT-t) \int_M k\epsilon (  C_2 \epsilon  )\\
&\leq  C_1 Z(t) + C_0 (\rT-t) \int_M k\epsilon \\
&\leq C Z(t)
\end{align*}
so 
$$ Z(t) \leq Z(0) e^{Ct}.$$
\end{proof}

\begin{definition}
Let $(M^8, \Phi, g)$ be a compact manifold with a $\mathrm{Spin}(7)$-structure. Let $u_{(x,t)}(y,s)=u^g_{(x,t)}(y,s)$ be the backwards heat kernel, starting from $\delta{(x,t)}$ as $s\rightarrow t$. For $\sigma>0$ we define
\begin{align}\label{eq:entropyeqn}
\lambda(\Phi, \sigma) = \underset{(x,t)\in M\times (0, \sigma]}{\textup{max}} \left\{ t\int_M |T_{\Phi}|^2(y) u_{(x,t)}(y,0) \vol\right \}.     
\end{align}
\end{definition}
One should think of $\sigma$ as the ``scale'' at which we are analyzing the flow. Since $M$ is compact, the maximum in \eqref{eq:entropyeqn} is achieved.

We can now state the $\varepsilon$-regularity theorem for the harmonic $\mathrm{Spin}(7)$-flow.

\begin{theorem}[$\varepsilon$-regularity]\label{thm:epsilonreg} \cite[Theorem 5.5]{DLSE}
    Let $(M^8,g)$ be compact and $E_0>0$. There exist $\varepsilon,\ \bar{\rho}>0$ such that, for every $\rho \in (0, \bar{\rho}]$, there exist $r\in (0, \rho)$ and $C<\infty$ such that the following holds:
    
    Suppose $\{\Phi(t)\}_{t\in [0, t_0)}$ is a solution of the harmonic $\mathrm{Spin}(7)$-flow \eqref{eq: Har Spin(7) Flow}, with induced metric $g$, satisfying $E(\Phi(0))\leq E_0$. Whenever 
\begin{align*}
    \Theta_{(x_0, t_0)}(\Phi(t_0-\rho^2)) < \varepsilon,    
    \quad \text{for some }  x_0\in M,
\end{align*}
then, setting $\Lambda_r(x,t) = \textup{min}\ \left(1-r^{-1}d_g(x_0, x), \sqrt{1-r^{-2}(t_0-t)}    \right)$, we have
\begin{align*}
    \Lambda_r(x, t)|T_{\Phi}(x,t)|\leq \frac{C}{r},    
    \quad\forall \ 
    (x,t)\in B(x_0, r)\times [t_0-r^2, t_0].
\end{align*}
\end{theorem}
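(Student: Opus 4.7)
I would argue by contradiction through a parabolic blow-up, following the template of the analogous $\mathrm{G}_2$ argument in \cite{DGK}. Suppose the conclusion fails for the given $E_0$: then one can extract sequences of solutions $\{\Phi_i(t)\}$ with $E(\Phi_i(0))\le E_0$, basepoints $(x_0^i, t_0^i)$, scales $\rho_i \to 0$ and radii $r_i \in (0, \rho_i)$ such that $\Theta_{(x_0^i, t_0^i)}(\Phi_i(t_0^i - \rho_i^2)) \to 0$, yet the scale-invariant supremum of $r_i \Lambda_{r_i}(x,t)\,|T_{\Phi_i}(x,t)|$ over the parabolic cylinder $B(x_0^i, r_i) \times [t_0^i - r_i^2, t_0^i]$ diverges. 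Continuity of this quantity on a compact set lets us pick $(x_i, t_i)$ achieving the supremum; set $Q_i := |T_{\Phi_i}(x_i, t_i)|$, which tends to infinity. A standard point-picking trick (using the doubling-time estimate together with the Lipschitz control on $\Lambda_{r_i}$) ensures that on a parabolic neighbourhood of intrinsic radius comparable to $1/Q_i$ around $(x_i, t_i)$, $|T_{\Phi_i}|$ stays $\le 2 Q_i$.

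Next, perform a parabolic rescaling: let $\tilde g_i := Q_i^2\, g$ and $\tilde\Phi_i(y,s) := Q_i^{4}\, \Phi_i\bigl(x_i + y/Q_i,\; t_i + s/Q_i^2\bigr)$, so that each $\tilde\Phi_i$ is a solution of the harmonic $\mathrm{Spin}(7)$-flow for $\tilde g_i$ with $|\tilde T_i(0,0)| = 1$ and $|\tilde T_i|\le 2$ on an expanding parabolic ball. Because $(M,g)$ has bounded geometry, the pointed manifolds $(M, \tilde g_i, x_i)$ converge in the Cheeger--Gromov sense to flat $(\mathbb{R}^8, g_{\mathrm{Eucl}}, 0)$; meanwhile, the Shi-type estimates applied at scale $1$ furnish uniform $C^m$-bounds on $\tilde T_i$ for every $m$. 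Hamilton's compactness and Arzel\`a--Ascoli then yield a smooth sub-sequential limit, a harmonic $\mathrm{Spin}(7)$-flow $\tilde\Phi_\infty$ on $\mathbb{R}^8 \times (-\infty, 0]$ with $|\tilde T_\infty(0,0)| = 1$.

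To close the contradiction, apply the almost-monotonicity of Theorem~\ref{thm:almostmon}(1) to each $\Phi_i$ and pass it through the rescaling: the curvature-dependent remainder and the logarithmic factor $\log(B/(t_0-t)^4)$ both carry positive powers of $1/Q_i$ and so vanish in the limit, reducing to the exact Euclidean monotonicity of part (2). Combined with $\Theta_{(x_0^i, t_0^i)}(\Phi_i(t_0^i - \rho_i^2)) \to 0$, this yields $\Theta^{\mathrm{Eucl}}_{(0,0)}(\tilde\Phi_\infty(s)) = 0$ for every $s<0$. Since the Euclidean backward heat kernel is strictly positive, this forces $\tilde T_\infty \equiv 0$ on a full parabolic neighbourhood, contradicting $|\tilde T_\infty(0,0)| = 1$ by continuity.

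The main obstacle will be the passage to the Euclidean limit in the monotonicity inequality: one has to verify carefully that the curvature and logarithmic error terms in Theorem~\ref{thm:almostmon}(1) genuinely rescale away as $Q_i\to\infty$, and that the rescaled backward heat kernels converge to the Euclidean Gaussian in a sense strong enough to retain the smallness of $\Theta$. The point-picking step, the application of Shi and Hamilton's compactness, and the final positivity argument are by now standard once the estimates of Sections~4 and~5 are in hand.
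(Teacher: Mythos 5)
Your overall strategy is a blow-up-by-contradiction plus compactness argument, whereas the proof in [DLSE] (following [DGK, Thm.\ 5.7]) is a direct, quantitative rescaling argument that never passes to a pointed limit flow: one picks the maximiser $(x_1,t_1)$ of $\Lambda_r|T|$ on the cylinder, rescales by $Q=|T(x_1,t_1)|$ to normalise the torsion, uses Shi-type estimates to get a definite lower bound $|\tilde T|\geq 1/2$ on a fixed-size rescaled ball, translates that into a lower bound $\Theta_{(x_1,t_1)}(\Phi(t_1-\cdot))\geq \varepsilon_0$, and finally contradicts the assumed smallness via almost-monotonicity. Both routes are legitimate, but yours has two concrete gaps that would have to be filled.

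First and most seriously, you rescale around $(x_i,t_i)$ but the smallness hypothesis is on $\Theta_{(x_0^i,t_0^i)}$, a \emph{different} base point of the backward heat kernel. Almost-monotonicity only propagates $\Theta$ in time for a \emph{fixed} base point; it does nothing to relocate the $\delta$-function. To transfer smallness you need an explicit comparison lemma of the form
$\Theta_{(x_1,t_1)}(\Phi(\tau)) \leq A\,\Theta_{(x_0,t_0)}(\Phi(\tau')) + \textup{(controlled error)}$
for parabolically nearby base points, proved from Gaussian bounds on the backward heat kernel; this is exactly the role of the auxiliary lemma preceding the $\varepsilon$-regularity result in \cite{DGK}, and it is the one step of your outline that is not ``standard once Shi and Hamilton's compactness are in hand.'' Without it, your chain ``$\Theta_{(x_0^i,t_0^i)}\to 0$ implies $\Theta^{\mathrm{Eucl}}_{(0,0)}(\tilde\Phi_\infty(s))=0$'' is unjustified, because the rescaled image of $x_0^i$ may drift off to infinity.

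Second, your explanation of why the error terms in Theorem~\ref{thm:almostmon}(1) disappear under rescaling is not quite right. The logarithmic factor $\log\big(B/(t_0-t)^4\big)$ does \emph{not} acquire positive powers of $1/Q_i$; under the parabolic rescaling it actually \emph{grows} like $\log Q_i^8$. What saves the day is that the log is integrable near $t=t_0$: after rescaling one integrates over a physical time interval of length $O(Q_i^{-2})$, so the exponential prefactor $K_1=\exp\big(C_1\!\int(1+\log\tfrac{B}{(t_0-t)^4})\,dt\big)\to 1$ and the additive error $K_2(\tau_2-\tau_1)(E_0+1)\to 0$. You should phrase the limit passage in terms of the shrinking time window rather than of the log term itself rescaling away. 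With the base-point comparison lemma in place and the above correction, the remaining steps (point-picking using the cutoff $\Lambda_r$, Cheeger--Gromov/Hamilton compactness from the Shi estimates, positivity of the Euclidean Gaussian forcing $\tilde T_\infty\equiv 0$) are indeed standard.
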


An immediate corollary of the $\varepsilon$-regularity theorem is the following result, which states that if the entropy of the initial $\mathrm{Spin}(7)$-structure is small then the torsion is controlled at all times. Again, the proof is similar to \cite[Cor. 5.8]{DGK}.

\begin{corollary}[small initial entropy controls torsion]\cite[Corollary 5.6]{DLSE}
\label{cor:enttor}
    Let $\{\Phi(t)\}$ be a solution of the harmonic $\mathrm{Spin}(7)$-flow \eqref{eq: Har Spin(7) Flow} on compact $(M, g)$,  starting at $\Phi_0$. For every $\sigma>0$, there exist $\varepsilon, t_0>0$ and $C< \infty$ such that, if $\Phi_0$ induces $g$ and its entropy \eqref{eq:entropyeqn} satisfies
\begin{align*}
    \lambda(\Phi_0, \sigma) < \varepsilon,  
\end{align*}
then
\begin{align*}
    \underset{M}{\textup{max}}\ |T_{\Phi(t)}|\leq \frac{C}{\sqrt{t}}.    
\end{align*}

\end{corollary}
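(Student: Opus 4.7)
The plan is to combine Theorem~\ref{thm:epsilonreg} ($\varepsilon$-regularity) with the almost monotonicity formula of Theorem~\ref{thm:almostmon}, the entropy hypothesis furnishing the smallness of $\Theta$ at the initial time uniformly in the choice of base point. Let $E_0 := E(\Phi_0)$, which bounds $E(\Phi(t))$ for all later $t$ since the energy is non-increasing along the flow, and let $\varepsilon_0, \bar\rho > 0$ be the constants provided by Theorem~\ref{thm:epsilonreg} for this value of $E_0$.

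Fix a base point $(x, t) \in M \times (0, \min(\sigma, 2\bar\rho^{\,2})]$ and set the outer scale $\rho := \sqrt{t/2}$, so that $\rho \in (0, \bar\rho]$ and $t - \rho^{2} = t/2$. To apply the $\varepsilon$-regularity theorem at $(x,t)$ with this $\rho$, one must check $\Theta_{(x,t)}(\Phi(t/2)) < \varepsilon_0$. The starting observation is that by the very definition of the entropy,
\[
\Theta_{(x,t)}(\Phi(0)) \;=\; t \int_M |T_{\Phi_0}|^2 \,u_{(x,t)}(\cdot, 0)\,\vol \;\leq\; \lambda(\Phi_0, \sigma) \;<\; \varepsilon,
\]
uniformly in $(x, t)$. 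Applying the almost monotonicity formula with $\tau_1 \to 0^+$ (smoothness of $\Phi_0$ gives the required continuity of $\Theta$ at $t=0$) and $\tau_2 = t/2$ yields
\[
\Theta_{(x,t)}(\Phi(t/2)) \;\leq\; K_1 \Theta_{(x,t)}(\Phi(0)) + K_2 \tfrac{t}{2}(E_0 + 1) \;<\; K_1 \varepsilon + K_2 \tfrac{t}{2}(E_0 + 1).
\]
Choosing $\varepsilon < \varepsilon_0/(2K_1)$ together with an upper threshold on $t$ less than $\varepsilon_0/(K_2(E_0+1))$ makes the right-hand side strictly below $\varepsilon_0$, as required.

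Theorem~\ref{thm:epsilonreg} then provides an inner scale $r \in (0, \rho)$ and a constant $C$ with $\Lambda_r(y, s)\,|T_\Phi(y, s)| \leq C/r$ on $B(x, r) \times [t - r^2, t]$. Evaluating at the centre $(x,t)$, where $\Lambda_r = 1$, and using the fact (which must be tracked through the construction of $r$ in the proof of Theorem~\ref{thm:epsilonreg}) that $r$ may be chosen as a fixed fraction of $\rho$ depending only on $E_0$ and the bounded geometry, we conclude $r \geq c\sqrt{t}$ and consequently $|T_{\Phi(t)}(x)| \leq C/r \leq C'/\sqrt{t}$. Taking the supremum over $x \in M$ gives the claim.

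The principal obstacle is that the statement of the $\varepsilon$-regularity theorem only asserts $r \in (0, \rho)$; converting the $C/r$ bound into the desired $t^{-1/2}$-decay requires the comparability $r \geq c\rho$, which must be read off from the parabolic rescaling underlying the proof of Theorem~\ref{thm:epsilonreg} (after \cite[Thm.~5.5]{DLSE} and its harmonic map heat flow analogue in \cite{DGK}) rather than from the statement itself. A secondary complication is the additive error $K_2 (t/2)(E_0 + 1)$ in the almost monotonicity formula, which couples the smallness of $\varepsilon$ to a simultaneous smallness constraint on $t$; this coupling is precisely what forces the corollary to come with a threshold time $t_0$ depending on $E_0$.
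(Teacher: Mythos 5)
Your reconstruction follows the same route as the paper's intended proof (the paper defers to \cite[Cor.~5.8]{DGK}): the entropy hypothesis gives uniform smallness of $\Theta_{(x,t)}(\Phi(0))$, the almost-monotonicity formula propagates this to time $t/2$ at the cost of an additive error linear in $t$ (whence the threshold $t_0$), and the $\varepsilon$-regularity theorem applied at the parabolic scale $\rho=\sqrt{t/2}$ delivers the pointwise bound. The obstacle you flag is real but benign: the inner radius $r$ in Theorem~\ref{thm:epsilonreg} is produced by a blow-up/rescaling argument and is indeed a fixed proportion of $\rho$ with a uniform constant $C$ -- this is implicit in the scale-invariant form of the conclusion $\Lambda_r|T|\le C/r$ -- so $C/r\lesssim \rho^{-1}\sim t^{-1/2}$ as you require; the only small point worth tidying is that one should first use $\lambda(\Phi_0,\sigma)<\varepsilon\le 1$ (and compactness of $M$) to bound $E(\Phi_0)$ by a quantity depending only on $(g,\sigma)$, so that the constants $\varepsilon_0,\bar\rho,t_0,C$ you extract from Theorem~\ref{thm:epsilonreg} do not depend on $\Phi_0$ itself, avoiding a circular dependence in the choice of $\varepsilon$.
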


\begin{theorem}[small initial torsion gives long-time existence]\cite[Theorem 5.9]{DLSE}
\label{thm: smalltorconv}
    Let $(M,\Phi_0,g)$ be a compact  $\mathrm{Spin}(7)$-structure manifold. For every $\delta >0$, there exists $\varepsilon (\delta, g) >0$ such that, if $|T_{\Phi_0}| < \varepsilon$, then a harmonic $\mathrm{Spin}(7)$-flow \eqref{eq: Har Spin(7) Flow} starting at $\Phi_0$ exists for all time and converges subsequentially smoothly to a $\mathrm{Spin}(7)$-structure $\Phi_{\infty}$ such that 
$$
\Div T_{\Phi_{\infty}} =0, \quad
|T_{\Phi_{\infty}}| < \delta.
$$
\end{theorem}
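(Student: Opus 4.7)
The plan is to combine the doubling-time estimate (DTE), Corollary~\ref{cor:enttor}, the Shi-type estimates, and the gradient-flow structure of $E$ to obtain long-time existence and extract a subsequential limit that is critical with small torsion.

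First I would observe that the pointwise smallness $|T_{\Phi_0}| < \varepsilon$ directly bounds the initial entropy at any scale $\sigma > 0$: since the backward heat kernel satisfies $\int_M u_{(x,t)}(\cdot, 0) \vol = 1$,
$$
t \int_M |T_{\Phi_0}|^2(y) \, u_{(x,t)}(y, 0) \vol \leq t \varepsilon^2 \leq \sigma \varepsilon^2
$$
for every $(x,t) \in M \times (0, \sigma]$, so $\lambda(\Phi_0, \sigma) \leq \sigma \varepsilon^2$. Picking $\sigma = \sigma(\delta, g)$ adapted to the desired conclusion and then choosing $\varepsilon$ small enough that $\sigma \varepsilon^2$ lies below the threshold of Corollary~\ref{cor:enttor}, that corollary yields $\max_M |T_{\Phi(t)}| \leq C/\sqrt{t}$ along the flow.

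The DTE simultaneously gives $|T_{\Phi(t)}| \leq 2\varepsilon$ on $[0, \bar{\delta}]$ for some geometry-dependent $\bar{\delta} > 0$, so together the two bounds yield a uniform $|T_{\Phi(t)}| \leq K$ for all $t$ in the maximal interval of existence. The Shi-type estimates then control all derivatives $|\nabla^m T_{\Phi(t)}|$, and the standard continuation criterion (the flow extends as long as $|T|$ stays bounded) promotes the short-time solution to a long-time one.

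To extract a convergent subsequence, I would exploit the gradient-flow nature of \eqref{eq: Har Spin(7) Flow}: the energy $E(\Phi(t))$ is non-increasing, and since $\partial_t \Phi = \Div T \diamond \Phi$ with $\diamond$ an isomorphism $\Omega^2_7 \to \Omega^4_7$ (Proposition~\ref{prop:diaproperties1}), $\|\partial_t \Phi\|_{L^2}$ is comparable to $\|\Div T\|_{L^2}$. Thus the total dissipation $\int_0^\infty \|\Div T_{\Phi(t)}\|_{L^2}^2 \, dt$ is bounded by a constant multiple of $E(\Phi_0)$, so there exists $t_k \to \infty$ with $\|\Div T_{\Phi(t_k)}\|_{L^2} \to 0$. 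The uniform smooth bounds, combined with Arzel\`a--Ascoli (or Hamilton-type compactness for geometric flows), extract a subsequence $\Phi(t_k)$ converging smoothly to a $\spi$-structure $\Phi_\infty$ inducing $g$, and the limit satisfies $\Div T_{\Phi_\infty} = 0$. Choosing $t_k$ large enough that $C/\sqrt{t_k} < \delta$ and invoking smooth convergence then gives $|T_{\Phi_\infty}| < \delta$.

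The delicate step, I expect, is justifying that the $C/\sqrt{t}$ bound of Corollary~\ref{cor:enttor} truly propagates on the whole half-line $[0, \infty)$ rather than only up to the fixed scale at which initial entropy smallness was established. This should be handled by iterating a time-shift argument in which the almost-monotonicity formula (Theorem~\ref{thm:almostmon}) transports entropy smallness forward at each step, the error terms being absorbed by the bounded geometry of $(M,g)$; once this propagation is secured, the remainder reduces to standard harmonic-map machinery.
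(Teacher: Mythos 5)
Your route is genuinely different from the paper's and it contains a real gap that you yourself flag at the end. The paper does \emph{not} go through entropy and Corollary~\ref{cor:enttor} at all: its argument is a maximal-time contradiction built on an interpolation lemma. Concretely: set $t_* := \max\{t\geq 0 : |T_{\Phi(t)}|\leq 2\varepsilon_0\}$; the DTE gives $t_*>\delta$; if $t_*<\infty$ then Shi-type estimates on $]t_*-\delta,t_*[$ give a bound $|\nabla T_{\Phi(t_*)}|<c_0$; since the flow is the negative gradient of $E$, $E(\Phi(t_*))\leq E(\Phi_0)$, which is already small because $|T_{\Phi_0}|<\varepsilon$; the interpolation lemma (a \emph{static} statement: bounded $|\nabla T|$ plus non-collapsing plus small energy implies small torsion) then forces $|T(t_*)|<2\varepsilon_0$ strictly, contradicting maximality, so $t_*=+\infty$. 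This interpolation lemma is the key input you are missing; without it there is no mechanism that turns ``energy stays small'' into ``torsion stays pointwise small''.

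The gap in your argument is exactly where you suspect it. Corollary~\ref{cor:enttor} gives the bound $\max_M|T_{\Phi(t)}|\leq C/\sqrt{t}$ only up to a time controlled by the entropy scale $\sigma$ (note the corollary's quantifiers already produce a $t_0$), so you cannot conclude a uniform bound on the whole maximal interval, and therefore cannot run Shi plus a continuation criterion to get global existence without a further ingredient. Your proposed fix (iterating time-shifted almost-monotonicity) is not carried out, and it is not obviously convergent: Theorem~\ref{thm:almostmon} carries additive error terms $K_2(\tau_1-\tau_2)(E(0)+1)$, so naive iteration can compound errors rather than propagate smallness. The paper's interpolation lemma sidesteps this entirely. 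Finally, two smaller remarks: (i) for the limit, the paper does not merely extract a dissipation subsequence; step~5 of its sketch uses the second variation of $E$ and the spectral gap $\Lambda$ of the Laplacian on $2$-forms to prove exponential decay of $\|\Div T\|_{L^2}$, giving $L^1$-Cauchyness of $\Phi(t)$ itself, not only a subsequence; (ii) the conclusion $|T_{\Phi_\infty}|<\delta$ is again obtained from the interpolation lemma applied to the limit, not from a bound of the form $C/\sqrt{t_k}<\delta$, which depends on the propagation you have not secured.
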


\begin{proof}[Sketch of proof]
1) If if $|T_{\Phi_0}| < \varepsilon_0$ then by the (DTE) there exists $\delta >0$ such that 
\begin{align}
    t_* := \textup{max}\{ t\geq 0\ :\  |T_{\Phi(t)}| \leq 2\varepsilon_0\} 
    > \delta.   
\end{align}
2) If $t_* < \infty$ then the Shi-type estimates on $]t_*-\delta , t_*[$ would imply 
\begin{align}
    |\del T_{\Phi(t_*)}|< c_0.  
\end{align}
3) But our flow is the negative gradient so $E(\Phi(t_*))\leq E(\Phi_0)$ so we can invoke the interpolation lemma (which is a static result):
 If $|\del T|\leq C$ and no collapsing, i.e.
\begin{align*}
    \vol(B(x, r))\geq v_0r^8, \text{for } 0<r\leq 1,
\end{align*}
    for some constant $v_0(M,g)>0$, then, for every $\varepsilon>0$, there exists $\delta(\varepsilon, C, v_0)\geq 0$ such that, if $E(\Phi) < \delta$
    then $|T|<\varepsilon$.\\
4) Conclude taking $\varepsilon < \min(\varepsilon_0, \gamma_{2\varepsilon_0} $ so that $|\del T_{\Phi(t_*)}|< \varepsilon_0 $ implies $|T(t_*)| < 2\varepsilon_0$ which contradicts the maximality of $t_*$ and forces $t_* = +\infty$.\\
5) If $\Lambda$ is the first eigenvalue of the Laplacian on 2-forms we can easily show that (cf. \cite[Lemma 5.7]{DLSE})
\begin{align*}
    \frac{d^2}{dt^2}E(\Phi (t)) &\geq \int_M (\Lambda -3 |T|^2) |\Div T|^2 \vol, 
\end{align*}
so if $|T|^2\leq \frac{\Lambda}{6}$
$$
\frac{d}{dt} \int_M |\Div T_{\Phi(t)}|^2 \vol = - \frac{d^2}{dt^2} E(\Phi(t))\leq -\frac{\Lambda}{2}\int_M |\Div T_{\Phi(t)}|^2 \vol .
$$
If we take $\varepsilon < \min(\varepsilon_0, \gamma_{2\varepsilon_0}, \gamma_{\sqrt{\frac{\Lambda}{6}}})$ then we obtain the decay estimate
\begin{align}
    \int_M |\Div T_{\Phi(t)}|^2 \vol \leq e^{-\frac{\Lambda t}{2}}\int_M |\Div T_{\Phi(0)}|^2 \vol, 
    \quad\forall \ t\geq 0.
\end{align}
6) Take $s_1<s_2$ and integrate to obtain
\begin{equation} \label{eq:ltetorpf5}
\begin{aligned}
    \int_M |\Phi(s_2)-\Phi(s_1)| \vol 
    &\leq  \int_M \int_{s_1}^{s_2} \left| \partial_t \Phi(s)  \right| ds \vol
    = \int_{s_1}^{s_2} \int_M |\Div T_{\Phi(s)}| \vol ds\\
    &\leq c\int_{s_1}^{s_2} \left(\int_M |\Div T_{\Phi(s)}|^2 \vol \right)^{\frac{1}{2}} ds\\
    &\leq c \int_{s_1}^{s_2}  e^{-\frac{\Lambda s}{4}} ds. 
\end{aligned}
\end{equation}
7) $\Phi(t)$ converges in $L^1$ to $\Phi_{\infty}$. \\
8) The uniform bound on $T$ combined with Shi-type estimates gives estimates on all $|\nabla^m T|$ and smooth convergence to $\Phi_\infty$. \\
9) The exponential decay of the integrals implies that $\Div T_{\Phi_\infty}= 0$, and by the interpolation lemma, we also achieve that $|T_{\Phi_\infty} |<\delta$. 
\qed
\end{proof}

\begin{theorem}[small entropy gives long-time existence]\cite[Theorem 5.10]{DLSE}
\label{thm:smallent}
    On a compact  $\mathrm{Spin}(7)$-structure manifold $(M,\Phi_0,g)$, there exist constants $C_k(M,g) < +\infty$, such that the following holds. For each $\varepsilon>0$ and $\sigma >0$, there exists $\lambda_{\varepsilon} (g, \sigma) >0$ such that, if the entropy \eqref{eq:entropyeqn} satisfies 
\begin{align}
\label{smallenteq}
    \lambda(\Phi_0, \sigma) < \lambda_{\varepsilon},
\end{align}
    then the torsion becomes eventually pointwise small along the harmonic $\mathrm{Spin}(7)$-flow \eqref{eq: Har Spin(7) Flow} starting at $\Phi_0$. Therefore the flow exists for all time and subsequentially converges to a $\mathrm{Spin}(7)$-structure $\Phi_{\infty}$ such that 
    $$
    \Div T_{\Phi_{\infty}} =0, \quad
    |T_{\Phi_{\infty}}| < \varepsilon \qandq
    |\nabla^k T_{\Phi_{\infty}}| < C_k,
    \,\forall\  k\geq 1.
    $$
\end{theorem}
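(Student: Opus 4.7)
The plan is to bootstrap Corollary~\ref{cor:enttor} and Theorem~\ref{thm: smalltorconv}: first I would use the small-entropy hypothesis to force the pointwise torsion below the threshold of Theorem~\ref{thm: smalltorconv} at some positive time, and then restart the flow from that time to inherit long-time existence and a subsequential smooth limit with the advertised properties.

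More precisely, given $\varepsilon > 0$ I would fix $\varepsilon_1 = \varepsilon(\varepsilon, g) > 0$ from Theorem~\ref{thm: smalltorconv}, so that an initial torsion pointwise below $\varepsilon_1$ guarantees long-time existence and subsequential smooth convergence to a $\spi$-structure $\Phi_\infty$ with $\Div T_{\Phi_\infty} = 0$ and $|T_{\Phi_\infty}| < \varepsilon$. Applying Corollary~\ref{cor:enttor} with this choice yields $\lambda_\varepsilon > 0$, a time $t_0 > 0$ and a constant $C < \infty$ such that whenever $\lambda(\Phi_0, \sigma) < \lambda_\varepsilon$, the harmonic $\spi$-flow starting at $\Phi_0$ exists on $[0, t_0]$ and satisfies $\max_M |T_{\Phi(t)}| \leq C/\sqrt{t}$. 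I would then select $t_1 \in (0, t_0]$ with $C/\sqrt{t_1} < \varepsilon_1$; the resulting $\spi$-structure $\Phi(t_1)$ still induces the metric $g$ (the flow is isometric) and has torsion below the threshold of Theorem~\ref{thm: smalltorconv}. Restarting the flow at time $t_1$ with initial datum $\Phi(t_1)$ and invoking Theorem~\ref{thm: smalltorconv} produces long-time existence past $t_1$ and a subsequential smooth limit $\Phi_\infty$ with $\Div T_{\Phi_\infty} = 0$ and $|T_{\Phi_\infty}| < \varepsilon$; concatenation with the segment $[0, t_1]$ then gives long-time existence from the original datum~$\Phi_0$.

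The higher-derivative bounds $|\nabla^k T_{\Phi_\infty}| < C_k$ fall out of the Shi-type estimates: once the uniform pointwise control $|T_{\Phi(t)}| \leq 2\varepsilon_1$ is in place on $[t_1, \infty)$, and since $(M,g)$ has bounded geometry, applying the Shi-type estimates on each unit-length subinterval $[s, s+1]$ with $s \geq t_1$ gives uniform bounds $|\nabla^m T_{\Phi(t)}| \leq C_m(M,g)$ for every $m \geq 1$ and all sufficiently large $t$. Subsequential smooth convergence then transfers these bounds to $\Phi_\infty$.

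The main obstacle I expect is the compatibility of constants rather than any new analytical input: $\lambda_\varepsilon$ must be chosen small enough, relative to the scale $\sigma$, the energy $E(\Phi_0)$ and the geometry of $(M,g)$, for the time $t_1$ at which the torsion is below $\varepsilon_1$ to lie within the effective interval $[0, t_0]$ furnished by Corollary~\ref{cor:enttor}. Concretely, this amounts to tracking how $t_0$ and $C$ depend on the entropy threshold in Corollary~\ref{cor:enttor} and shrinking $\lambda_\varepsilon$ accordingly; once this bookkeeping is performed, the remainder of the argument is a direct concatenation of the two previous theorems with the Shi-type estimates.
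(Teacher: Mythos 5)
There is a genuine gap in the way you argue that the pointwise torsion becomes small. You propose to choose $t_1\in(0,t_0]$ so that $C/\sqrt{t_1}<\varepsilon_1$, but the constants $C$ and $t_0$ furnished by Corollary~\ref{cor:enttor} depend only on $\sigma$, $E_0$ and the geometry of $(M,g)$; they do \emph{not} improve as the entropy threshold $\lambda_\varepsilon$ is tightened. If $t_0\leq (C/\varepsilon_1)^2$ (which is exactly the situation one must expect, since the only effect of shrinking $\lambda_\varepsilon$ is to make more initial data admissible, not to change $C$ or $t_0$), then no admissible $t_1$ exists, and the argument collapses. You flag this as ``bookkeeping,'' but it is in fact the crux: the bound $C/\sqrt{t}$ alone never produces a pointwise bound below a prescribed threshold at a controlled positive time.

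The step you are missing is the interpolation lemma, which is the actual mechanism the paper uses. The chain of implications is: small entropy $\lambda(\Phi_0,\sigma)$ forces the \emph{energy} $E(\Phi_0)$ to be small (since the backward heat kernel on a compact manifold is bounded below at scale $\sigma$, the integral $t\int_M|T|^2 u\,\vol$ dominates a multiple of $E(\Phi_0)$); energy is monotone along the flow, so $E(\Phi(\tau))$ is small; Corollary~\ref{cor:enttor} gives $|T(t)|\leq C/\sqrt{t}$ on $(0,\tau]$, hence a uniform bound $|T(\tau)|\leq C/\sqrt{\tau}$, and then Shi-type estimates give $|\nabla T(\tau)|<C'$; finally the interpolation lemma (step 3 of the proof of Theorem~\ref{thm: smalltorconv}) converts the combination \emph{small energy plus bounded $\nabla T$ plus noncollapsing} into a pointwise smallness $|T(\tau)|<\varepsilon_1$. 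Only then can you restart the flow via Theorem~\ref{thm: smalltorconv}. Your proposal to restart from $\Phi(t_1)$ and to obtain the higher derivative bounds from Shi-type estimates on unit subintervals is otherwise sound and matches the paper's conclusion, but without the interpolation step the smallness at the restart time is not justified.
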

\begin{proof}[Sketch of proof]
1) $\lambda_{\varepsilon}$ small enough implies $ |T| \leq \frac{C}{\sqrt{t}}$ for all $t\leq \tau$.\\
2) Shi-type estimates imply $|\nabla T(\tau)| < C'$.\\
3) Interpolation lemma: $\forall \epsilon >0$, for small enough $\lambda_{\varepsilon}$, $|T(\tau)| < \epsilon$.\\
4) Small $|T(\tau)|$  implies long-time existence.\\
5) We conclude as with the previous result.
\qed
\end{proof}

Let $\varepsilon$ and $\bar{\rho}$ be the quantities from the $\varepsilon$-regularity Theorem~\ref{thm:epsilonreg}. We define the \emph{singular set} of the flow by 
\begin{align}
\label{eq:singsetdefn}
    S = \{ x\in M \ :\ \Theta_{(x, \tau)}(\Phi(\tau-\rho^2)) \geq \varepsilon, \ \textup{for\ all}\ \rho\in (0, \bar{\rho}]\}.    
\end{align}
The following lemma explains why $S$ is called the singular set of the flow.

\begin{lemma}
\label{lem:sing}
    The harmonic $\mathrm{Spin}(7)$-flow $\{\Phi(t)\}_{t\in [0, \tau)}$ restricted to $M\setminus S$ converges as $t\rightarrow \tau$, smoothly and uniformly away from $S$, to a smooth harmonic $\mathrm{Spin}(7)$-structure $\Phi(\tau)$ on $M\setminus S$. Moreover, for every $x\in S$, there is a sequence $(x_i, t_i)\to(x,\tau)$ such that 
\begin{align*}
    \lim_{i} |T_{\Phi}(x_i, t_i)|= \infty.    
\end{align*}
Thus, $S$ is indeed the singular set of the flow.
\end{lemma}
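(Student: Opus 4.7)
The plan is to exploit the $\varepsilon$-regularity Theorem~\ref{thm:epsilonreg} for the smooth-convergence part, and use it contrapositively together with the concentration of the backward heat kernel for the blow-up part.

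For the first assertion, fix $x_0 \in M\setminus S$. By definition of $S$, there exists some $\rho \in (0,\bar\rho]$ such that $\Theta_{(x_0,\tau)}(\Phi(\tau-\rho^2)) < \varepsilon$. Since the backward heat kernel $u_{(y,\tau)}(\cdot, \tau-\rho^2)$ depends continuously on the base point $y$ (the time $\tau-\rho^2$ is bounded away from $\tau$), the function $y\mapsto \Theta_{(y,\tau)}(\Phi(\tau-\rho^2))$ is continuous, hence the same strict bound holds on some open neighborhood $U \ni x_0$. Apply Theorem~\ref{thm:epsilonreg} at each $y\in U$ to obtain a uniform bound $|T_\Phi(y,t)|\leq C/r$ on a parabolic neighborhood $B(y,r)\times[\tau-r^2,\tau]$, which by a standard covering argument yields $\sup_{K\times[\tau-r^2,\tau)} |T_\Phi| < \infty$ for any compact $K\subset M\setminus S$. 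The Shi-type estimates then control $|\nabla^m T|$ on slightly shrunk parabolic neighborhoods, and the evolution equation $\partial_t \Phi = \Div T \diamond \Phi$ together with Proposition~\ref{prop: evolution of T} upgrades this to uniform $C^k$ bounds on $\Phi(t)$ for all $k$. Cauchy-ness in $C^k$ on compacts of $M\setminus S$ follows by integrating $\partial_t\Phi$ in time, providing the claimed smooth convergence to a limit $\Phi(\tau)$.

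For the second assertion, I argue by contrapositive: if $x \in M$ admits no sequence $(x_i,t_i)\to(x,\tau)$ along which $|T_\Phi|$ diverges, then $|T_\Phi|$ is bounded by some constant $K$ on a parabolic neighborhood $B(x,R)\times[\tau-\eta,\tau)$. I then show $x\notin S$ by estimating $\Theta_{(x,\tau)}(\Phi(\tau-\rho^2))$ for small $\rho$. Splitting the integral over $B(x,R)$ and its complement, the inside contribution is bounded by
\begin{align*}
\rho^2 \int_{B(x,R)} |T|^2 u \, \vol \leq K^2 \rho^2 \int_{B(x,R)} u\, \vol \leq K^2 \rho^2,
\end{align*}
which tends to $0$ as $\rho\to 0$; the outside contribution uses the global $L^2$ bound $\int_M |T|^2\vol \leq 2E(\Phi_0)$ provided by the gradient-flow structure, together with the Gaussian estimate $u(y,\tau-\rho^2) \leq C\rho^{-8}\exp(-d(x,y)^2/(C\rho^2))$, which forces $u$ to decay faster than any polynomial in $\rho$ uniformly on $M\setminus B(x,R)$, so this contribution also tends to $0$. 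For $\rho$ small enough (and $\leq \bar\rho$) we conclude $\Theta_{(x,\tau)}(\Phi(\tau-\rho^2)) < \varepsilon$, i.e. $x\notin S$.

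The delicate step is controlling the tail of the heat kernel estimate in the second part on a general compact manifold rather than $\bR^8$: one cannot simply use the Euclidean Gaussian, and one must invoke a Li--Yau / Cheeger--Yau style Gaussian upper bound for $u$ on $(M,g)$ (available from bounded geometry), ensuring the crucial decay of the exterior contribution as $\rho\to 0$. The rest consists of careful but standard applications of the $\varepsilon$-regularity plus Shi theorems already stated; no new evolution identity is needed beyond those established in the preceding sections.
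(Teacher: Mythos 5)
Your proposal takes the standard Struwe-type route used for the harmonic map heat flow, which is indeed the approach of the paper (following \cite{DGK}): $\varepsilon$-regularity plus Shi estimates for the regular part, and a contrapositive argument using concentration of the backward heat kernel for the blow-up part. The structure is correct, but two technical points need to be stated more carefully.

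\emph{The weight $\Lambda_r$.} Theorem~\ref{thm:epsilonreg} gives $\Lambda_r(x,t)\,|T_{\Phi}(x,t)|\leq C/r$ on $B(x_0,r)\times[t_0-r^2,t_0]$, and $\Lambda_r$ vanishes on the parabolic boundary of that cylinder. You cannot read off $|T_\Phi|\leq C/r$ on the full cylinder; you must pass to a shrunk cylinder, e.g.\ $B(x_0,r/2)\times[t_0-r^2/4,t_0]$, where $\Lambda_r\geq 1/2$, to obtain a genuine $C^0$ bound. This is easy to fix but should be said, as otherwise the covering argument that you invoke is not literally applying the hypothesis.

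\emph{Local versus global Shi estimates.} The Shi-type estimates quoted in the paper are stated with the hypothesis $|T|\leq K$ on all of $M\times[0,1/K^2]$, whereas the $\varepsilon$-regularity step only gives a bound on $T$ on parabolic cylinders inside $M\setminus S$. To upgrade the local $C^0$ bound on $T$ to bounds on $|\nabla^m T|$ on a slightly smaller cylinder, one needs a \emph{local} (interior) version of the Shi estimates, proved by the usual cutoff-function argument. Your phrasing ``on slightly shrunk parabolic neighborhoods'' suggests you have this in mind, but since the lemma as stated in the paper is global, this localization is a genuine (albeit standard) additional ingredient that should be flagged explicitly.

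For the second assertion, the contrapositive is handled correctly: the negation of the existence of a sequence along which $|T|$ diverges is precisely $\limsup_{(y,t)\to(x,\tau)}|T(y,t)|<\infty$, so $|T|\leq K$ on some $B(x,R)\times[\tau-\eta,\tau)$. Splitting $\Theta_{(x,\tau)}(\Phi(\tau-\rho^2))$ into the contribution from $B(x,R)$ (bounded by $K^2\rho^2$ via $\int_M u\,\vol=1$) and the exterior (controlled by the global $L^2$ bound $\int_M|T|^2\vol\leq 2E(\Phi_0)$ together with a Li--Yau type Gaussian upper bound on $u$, which makes $\rho^2\sup_{M\setminus B(x,R)}u\to 0$ as $\rho\to 0$) is exactly the right decomposition and gives $\Theta<\varepsilon$ for small $\rho$, hence $x\notin S$. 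You correctly identify that the Gaussian upper bound is the one nontrivial input here and is available on a compact manifold by bounded geometry. With the two technical clarifications above, the proof is sound and follows the same lines as the paper's.
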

\begin{theorem}[Hausdorff measure of the singularity set] \cite[Theorem D]{DLSE}
\label{thm: singsize}
\begin{align}
\label{eq:singsize}
    E(\Phi_0)= \frac 12\int_M |T_{\Phi_0}|^2 \vol \leq E_0.  
\end{align}
    Suppose that the maximal smooth harmonic $\mathrm{Spin}(7)$-flow $\{\Phi(t)\}_{t\in [0,\tau)}$ starting at $\Phi_0$ blows up at time $\tau< +\infty$. Then, as $t\rightarrow \tau$, \eqref{eq: Har Spin(7) Flow} 
    converges smoothly to a $\mathrm{Spin}(7)$-structure $\mathrm{Spin}(7)_{\tau}$ away from a closed set $S$, with finite $6$-dimensional Hausdorff measure satisfying 
\begin{align*}
    \mathcal{H}^6(S) \leq CE_0,  
\end{align*}
    for some constant $C<\infty$ depending on $g$. In particular, the Hausdorff dimension of $S$ is at most 6.
\end{theorem}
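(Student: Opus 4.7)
The plan is to combine the $\varepsilon$-regularity Theorem~\ref{thm:epsilonreg} and the almost monotonicity formula of Theorem~\ref{thm:almostmon} with a Vitali-type covering argument, following the scheme of \cite{DGK} for the $\mathrm{G}_2$ case and, ultimately, Struwe's treatment of the harmonic map heat flow.

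Smooth convergence to a harmonic $\spi$-structure $\Phi(\tau)$ on $M\setminus S$ is precisely the content of Lemma~\ref{lem:sing}: the $\varepsilon$-regularity gives uniform $C^0$ control on $T$ in parabolic neighborhoods of non-singular points, and the Shi-type estimates of the previous section upgrade this to uniform control on all derivatives, yielding a smooth subsequential limit that still lies in the admissible locus because the flow preserves the isometric class and the metric is fixed.

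The first real step is to convert the density lower bound built into the definition of $S$ into a lower bound for localised torsion energy. For $x\in S$ and $\rho\in(0,\bar{\rho}]$,
\[
\varepsilon \leq \Theta_{(x,\tau)}(\Phi(\tau-\rho^{2}))
   = \rho^{2}\int_M |T_{\Phi(\tau-\rho^{2})}|^{2}\,u_{(x,\tau)}(\cdot,\tau-\rho^{2})\,\vol .
\]
On a compact manifold the backward heat kernel behaves like $(4\pi\rho^{2})^{-4}e^{-d_g(x,y)^{2}/(4\rho^{2})}$ up to corrections from bounded geometry, controlled via Hamilton's estimates \cite{Hamilton1993} (already used for the almost monotonicity formula). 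Splitting $M$ into $B_g(x,A\rho)$ and its complement and absorbing the Gaussian-tail contribution through the gradient-flow bound $E(\Phi(t))\leq E_0$, I would extract
\[
\int_{B_g(x,\rho)} |T_{\Phi(\tau-\rho^{2})}|^{2}\,\vol \geq c\,\varepsilon\,\rho^{6},
\]
for some constant $c=c(g)>0$ and all $\rho$ below a threshold depending on $\varepsilon$ and $E_0$.

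Having this local lower bound, fix any small $\delta\leq\bar{\rho}$ and apply the $5r$-covering lemma to $\{B_g(x,\delta)\}_{x\in S}$ to obtain a countable pairwise disjoint subfamily $\{B_g(x_i,\delta)\}$ with $S\subset\bigcup_i B_g(x_i,5\delta)$. Since the balls are spatially disjoint and all evaluated at the single time $\tau-\delta^{2}$, the local lower bounds sum cleanly:
\[
\sum_{i} c\,\varepsilon\,\delta^{6}
\leq \sum_{i}\int_{B_g(x_i,\delta)} |T_{\Phi(\tau-\delta^{2})}|^{2}\,\vol
\leq 2 E(\Phi(\tau-\delta^{2})) \leq 2E_0,
\]
hence the approximate Hausdorff content satisfies $\mathcal{H}^{6}_{5\delta}(S)\leq 5^{6}\sum_i\delta^{6}\leq CE_0/\varepsilon$. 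Letting $\delta\to 0$ yields $\mathcal{H}^6(S)\leq CE_0$ and, in particular, the Hausdorff dimension bound. The main obstacle is the first step, i.e.\ localising $\Theta$ to a ball: on $\bR^{8}$ the Gaussian tail bound is immediate, but on a general compact $M^{8}$ one must control the backward heat kernel pointwise and absorb the error terms emerging from the almost (rather than exact) monotonicity; once this comparison is secured, the covering argument and the decreasing energy along the flow do the rest.
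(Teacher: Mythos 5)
Your overall strategy — Vitali covering plus the $\varepsilon$-regularity density bound, following \cite{DGK} and ultimately Struwe — is the right one, and the smooth-convergence part correctly defers to Lemma~\ref{lem:sing} and the Shi estimates. The paper's own sketch is more compressed: it integrates $\Theta$ directly over $S$ against $d\mathcal{H}^6$ and hides the covering behind the Fubini step $\int_S\int_M \rho^2|T|^2 u\,\vol\,d\mathcal{H}^6(x)\leq C\int_M|T|^2\vol$, which itself requires a bound on $\rho^2\int_S u_{(x,\tau)}(y,\tau-\rho^2)\,d\mathcal{H}^6(x)$, i.e.\ a $6$-dimensional Ahlfors-type control of $S$ — exactly what a Vitali argument supplies. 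So your route is the more transparent one.

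There is, however, a genuine gap in the localization step. From $\varepsilon\leq\Theta_{(x,\tau)}(\Phi(\tau-\rho^2))$ and $u\approx(4\pi\rho^2)^{-4}e^{-d^2/4\rho^2}$ one gets
\begin{align*}
\varepsilon(4\pi)^4\rho^6\ \leq\ \int_{B(x,A\rho)}|T|^2\vol\ +\ 2E_0\,e^{-A^2/4}.
\end{align*}
For a \emph{fixed} truncation parameter $A$ the tail term $2E_0e^{-A^2/4}$ is $\rho$-independent and therefore dominates $\varepsilon(4\pi)^4\rho^6$ for $\rho$ small, so the claimed bound $\int_{B(x,\rho)}|T|^2\vol\geq c\varepsilon\rho^6$ with $c=c(g)$ uniform in $\rho$ does not follow — there is no ``threshold in $\rho$'' below which it holds. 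To salvage it one must take $A=A(\rho)\sim\sqrt{\log(4E_0/\varepsilon\rho^6)}\to\infty$, but then the covering balls have radius $A(\rho)\rho$ and the resulting bound $\mathcal{H}^6_{5A\rho}(S)\lesssim A(\rho)^6E_0/\varepsilon$ blows up as $\rho\to0$, so one cannot pass to the limit and conclude $\mathcal{H}^6(S)<\infty$.

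The standard fix is not to localize each Gaussian density separately but to sum them over the disjoint family. If $\{x_i\}_{i=1}^N\subset S$ are $2\rho$-separated and all densities are evaluated at the common time $\tau-\rho^2$, then
\begin{align*}
N\varepsilon\ \leq\ \sum_i\Theta_{(x_i,\tau)}(\Phi(\tau-\rho^2))
= \rho^2\int_M|T|^2\Big(\sum_i u_{(x_i,\tau)}(\cdot,\tau-\rho^2)\Big)\vol,
\end{align*}
and the key observation is that $\sum_i u_{(x_i,\tau)}(y,\tau-\rho^2)\leq C(g)\rho^{-8}$ uniformly in $y$: at most $O(k^7)$ of the $2\rho$-separated points $x_i$ lie in the annulus $\{k\rho\leq d(x_i,y)<(k+1)\rho\}$ and contribute a Gaussian weight $\leq e^{-k^2/4}$, so the sum converges with a constant depending only on the volume growth of $(M,g)$. (On a compact manifold one uses the Hamilton-type two-sided heat kernel bounds you already invoke for the almost-monotonicity formula.) This gives $N\varepsilon\leq C\rho^{-6}\cdot 2E_0$, hence $N\leq 2CE_0/(\varepsilon\rho^6)$, and the $5r$-covering lemma then yields $\mathcal{H}^6_{5\rho}(S)\leq N(5\rho)^6\leq 2\cdot5^6CE_0/\varepsilon$ uniformly in $\rho$, which passes to the limit. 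With this replacement your argument is complete.
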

\begin{proof}[Sketch of proof]
    The proof relies on the following computation:
    \begin{align*}
 \varepsilon \mathcal{H}^6(S) = \int_{S}\varepsilon d\mathcal{H}^6(x) &\leq \int_{S} \Theta(\Phi(\tau-\rho^2)) d\mathcal{H}^6(x) \\ 
 &\leq \int_{S}\int_M \rho^2 |T|^2 u\vol d\mathcal{H}^6(x)\\
& \leq \int_M \rho^2 |T|^2 u \vol\\
&\leq C \int_M |T|^2 \vol \\
& \leq CE_0.
\end{align*}
\qed
\end{proof}

\end{document}